\documentclass[12pt,reqno]{amsart}
\usepackage[notref,notcite]{}
\usepackage{graphicx}
\usepackage{amssymb}
\usepackage{amsmath}
\usepackage{amsmath,color,pdfsync,comment}
\usepackage{float}
\usepackage{subcaption}

\usepackage[margin=1in]{geometry}

\newcommand{\R}{\mathbb{R}}
\newcommand{\D}{\mathbb{D}}
\newcommand{\Z}{\mathbb{Z}}

\newcommand{\PD}{\partial}

\newcommand{\Beq}{\begin{equation}}
    \newcommand{\Eeq}{\end{equation}}
\newcommand{\beq}{\begin{equation*}}
    \newcommand{\eeq}{\end{equation*}}
\newcommand{\bal}{\begin{align}}
    \newcommand{\eal}{\end{align}}

\let\RealPart\Re
\renewcommand{\Re}{\rangle_{e}}
\newcommand{\g}{\gamma}

\newcommand{\bp}{\begin{prob}}
    \newcommand{\ep}{\end{prob}}
\newcommand{\bpr}{\begin{proof}}
    \newcommand{\epr}{\end{proof}}

\newcommand{\bel}[1]{\begin{equation}\label{#1}}
    \newcommand{\ee}{\end{equation}}

\allowdisplaybreaks
\newtheorem{theorem}{Theorem}[section]

\newtheorem{proposition}[theorem]{Proposition}

\newtheorem{problem}{Problem}

\theoremstyle{definition}

\newcommand{\vp}{\varphi}

\newcommand{\Cb}{\mathbb{C}}

\newcommand{\Db}{\mathbb{D}}

\newcommand{\Sb}{\mathbb{S}}

\usepackage{hyperref}
\numberwithin{equation}{section}

\begin{document}


\title[Numerical solution of the two-dimensional Calder\'{o}n problem]{Numerical solution of the two-dimensional Calder\'{o}n problem
based on the Hilbert transform of a planar domain}
\author[S.\ Gohri and V.\ Sharafutdinov]{Sagar Gohri
and Vladimir Sharafutdinov
}
\address{TIFR Centre for Applicable Mathematics, Bangalore, India}
\email{sagar23@tifrbng.res.in}
\address{Sobolev Institute of mathematics. 4 Koptyug av., Novosibirsk, 630090, Russia}
\email{sharafut@list.ru}

\setcounter{section}{0}
\setcounter{page}{1}

\begin{abstract}
Let $(M,g)$ be a $C^{\infty}$ smooth compact connected Riemannian manifold with boundary $\partial M$. Consider the Dirichlet problem:
$\Delta_gu=0,\ u|_{\partial M}=f$ for $f\in C^{\infty}(\PD M)$.  The Diri\-chl\-et-to-Neumann (DN) operator
$\Lambda_g:C^\infty(\partial M)\longrightarrow C^\infty(\partial M)$ is defined by
$\Lambda_gf=\left.\frac{\partial u}{\partial\nu}\right|_{\partial M}$,
where $\nu$ is the unit outer normal to the boundary and $u$ is the unique solution to the Dirichlet problem.
Let $g_\partial$ be the Riemannian metric on $\partial M$ induced by $g$. The Calder\'{o}n problem is as follows: To what extent is $(M,g)$
determined by the data $(\partial M,g_\partial,\Lambda_g)$?
In the two-dimensional case the surface $(M,g)$ is determined by the DN data uniquely up to conformal equivalence.
Knowledge of the DN data is equivalent to knowledge of the Hilbert transform ${\mathcal H}_\Omega:C^\infty(\Gamma)\to C^\infty(\Gamma)$ on the
boundary curve $\Gamma=\partial\Omega$ of a planar domain $\Omega$. We study properties of the Hilbert transform. In particular, we obtain an
integral formula for ${\mathcal H}_\Omega$ for a simply connected $\Omega$ which generalizes the classical integral formula for the Hilbert
transform on the unit circle. This formula is the base of our algorithm for reconstructing a simply connected planar domain from the DN data.
Several numerical reconstructions are presented.

\noindent
Keywords: Dirichlet-to-Neumann map, Calder\'{o}n problem, Hilbert transform, numerical methods.
\end{abstract}

\maketitle

\section{Introduction}

Let $(M,g)$ be a smooth (synonymous with $C^{\infty}$ smooth) compact connected Riemannian manifold with non-empty boundary $\partial M$.
We denote by $g_\partial$ the Riemannian metric induced on $\PD M$ by $g$.
Let $\Delta_g=g^{ij}\nabla_{\!i}\nabla_{\!i}:C^\infty(M)\to C^\infty(M)$ be the Laplace-Beltrami operator of the metric $g$
where $(g^{ij})=(g_{ij})^{-1}$.
The Dirichlet-to-Neumann operator (DN-map),
$$
\Lambda_g:C^\infty(\partial M)\longrightarrow C^\infty(\partial M)
$$
is defined by
$\Lambda_gf=\left.\frac{\partial u}{\partial\nu}\right|_{\partial M}$,
where $\nu$ is the unit outer normal to the boundary and $u$ is the unique solution to the boundary value problem
$$
\Delta_gu=0\quad\mbox{in}\quad M,\quad u|_{\partial M}=f.
$$
As is well-known, $\Lambda_g$ is a first order pseudodifferential operator.

The geometric Calder\'on problem problem is posed as follows: To what extent is a Riemannian manifold $(M,g)$ determined by the data
$(\partial M,g_\partial,\Lambda_g)$?

The Calder\'on problem is the main mathematical tool of Electrical Impedance Tomography (EIT) where one tries to determine unisotropic
conductivity inside a domain from results of current-voltage measurements on the boundary of the domain. EIT has important applications
in medical diagnostics, see \cite{AGL} and \cite[Section 8]{Uh} for details.

As is well-known, one cannot recover the metric $g$ on $M$ uniquely from the above boundary information. If $\vp:M\to M$ is a diffeomorphism
fixing the boundary, then $g$ and the pull-back metric $g'=\vp^{*}g$ satisfy $g_{\PD}=g'_{\PD}$ and $\Lambda_g=\Lambda_{g'}$.
Additionally, in two-dimensions, the Calder\'on problem has a non-uniqueness due to the conformal invariance of the Laplace-Beltrami operator:
$\Delta_{\rho g}=\rho^{-1}\Delta_g$ for a positive function $\rho\in C^{\infty}(M)$. Hence if we take
$0<\rho\in C^{\infty}(M)$ such that $\rho|_{\PD M}\equiv 1$, then $\Lambda_{\rho g}=\Lambda_g$.

This paper focuses  on the two-dimensional Calder\'on problem.
The boundary of a Riemannian surface $(M,g)$ will be  denoted by $\Gamma=\partial M$. Instead of $g_\partial$, we use the arc-length $ds_g$
of the curve $\Gamma$ with respect to the metric $g$. For a compact Riemannian surface $(M,g)$ with non-empty boundary $\Gamma$, the DN-map
$$
\Lambda_g:C^\infty(\Gamma)\to C^\infty(\Gamma)
$$
is a non-negative self-adjoint operator with respect to the $L^2$-product
$$
(u,v)_{L^2(\Gamma)}=\int_\Gamma u\overline v\,ds_g\quad(u,v\in C^\infty(\Gamma)).
$$
The one-dimensional kernel of $\Lambda_g$ consists of constant functions while the range of $\Lambda_g$ coincides with the space
$$
C_0^\infty(\Gamma)=\Big\{f\in C^\infty(\Gamma)\mid\int_\Gamma f\,ds_g=0\Big\}
$$
of functions with zero mean value.

For a smooth map $\varphi:N\rightarrow N'$ between two manifolds, we define the linear operator
$\varphi^*:C^\infty(N')\rightarrow C^\infty(N)$ by $\varphi^* u=u\circ\varphi$.
We are now ready to state the following result.

\begin{theorem} \cite{LaU,Be}\label{Th1.1}
Let $(M_j,g_j)\ (j=1,2)$ be two compact Riemannian surfaces with non-empty boundaries $\Gamma_j=\partial M_j$ and let
$\varphi:(\Gamma_1,ds_{g_1})\rightarrow(\Gamma_2,ds_{g_2})$ be an isometry preserving the DN-map, i.e., such that the following diagram is
commutative:
$$
\begin{array}{ccc}
C^\infty(\Gamma_1)&\stackrel{\varphi^*}\longleftarrow&C^\infty(\Gamma_2)\\
\Lambda_{g_1}\downarrow&&\downarrow\Lambda_{g_2}\\
C^\infty(\Gamma_1)&\stackrel{\varphi^*}\longleftarrow&C^\infty(\Gamma_2).
\end{array}
$$
Then $\varphi$ extends to a diffeomorphism $\psi:M_1\rightarrow M_2$ such that $\psi|_{\Gamma_1}=\varphi$ and $\psi^*g_2=\rho g_1$ for some
function $0<\rho\in C^\infty(M_1)$ satisfying $\rho|_{\partial M_1}=1$.
\end{theorem}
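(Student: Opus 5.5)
We follow the Lassas--Uhlmann / Belishev idea: recover the surface from the algebra of holomorphic functions, which can be read off from the DN data on the boundary. Work with the conformal structure of $(M_j,g_j)$, so that each $M_j$ is a bordered Riemann surface (orientable after passing to the orientation double cover if needed; we assume orientable here). For a real $f\in C^\infty(\Gamma_j)$ with harmonic extension $u$, the Cauchy--Riemann equations give $\partial_\nu u=\partial_\tau v$ for a harmonic conjugate $v$ of $u$, where $\tau$ is the unit tangent. Hence $\Lambda_{g_j}f=\partial_\tau(\mathcal H f)$, and the tangential derivative $\partial_\tau$ is determined by $(\Gamma_j,ds_{g_j})$.

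\textbf{Step 1: characterize boundary traces of holomorphic functions.} A complex function $w=f+ih$ on $\Gamma_j$ is the trace of a holomorphic function on $M_j$ exactly when $\Lambda_{g_j} f=\partial_\tau h$ and $\Lambda_{g_j} h=-\partial_\tau f$. This condition needs care on multiply connected or higher-genus surfaces, where conjugates may have periods. The cleanest route is to require both $f$ and $h$ to have harmonic extensions $u,v$ with $\partial_\nu u=\partial_\tau h$ and $\partial_\nu v=-\partial_\tau f$. Then the form $du+*dv$ is harmonic and has vanishing normal data on the boundary, hence it vanishes (Hodge--Morrey uniqueness with the tangential condition). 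So $u+iv$ is holomorphic.

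\textbf{Step 2: transport the algebra.} Since $\varphi$ is an isometry and intertwines the DN maps, $\varphi^*$ maps the algebra $\mathcal A_2$ of boundary traces of holomorphic functions on $M_2$ bijectively onto $\mathcal A_1$, possibly after conjugation if $\varphi$ reverses orientation. The map is an algebra isomorphism because it is multiplicative on functions.

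\textbf{Step 3: Gelfand reconstruction.} The main step is to show that $M_j$ is homeomorphic to the spectrum of the Banach algebra $\overline{\mathcal A_j}$ (the sup-norm closure), with interior points corresponding to evaluation characters. The key facts are these: holomorphic functions continuous up to the boundary separate points and give local coordinates on a bordered Riemann surface (via Behnke--Stein / Royden embedding); and every character is an evaluation at a point, by a maximum-principle plus partition argument, or by embedding $M_j$ as a piece of an open surface and using holomorphic convexity. The isomorphism $\varphi^*$ then induces a homeomorphism $\psi:M_1\to M_2$ extending $\varphi$ with $\psi^*\mathcal A_2=\mathcal A_1$. Since local holomorphic coordinates pull back to holomorphic functions, $\psi$ is biholomorphic, hence smooth up to the boundary and conformal: $\psi^*g_2=\rho g_1$.

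\textbf{Step 4: boundary value of $\rho$.} On $\Gamma_1$ the map $\psi$ restricts to the isometry $\varphi$, so $\rho$ equals $1$ in tangential directions. Since $\psi^*g_2=\rho g_1$ is conformal, the scaling is isotropic, and therefore $\rho|_{\Gamma_1}=1$.

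I expect Step 3, showing that characters are point evaluations together with boundary regularity of $\psi$, to be the main obstacle.
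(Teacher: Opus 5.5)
Your outline is Belishev's algebraic proof, one of the two proofs the paper cites. The paper itself does not prove Theorem~\ref{Th1.1}; it only indicates the elementary route for simply connected $M$. That route flattens the metric conformally with $\rho|_{\partial M}=1$ (Proposition~\ref{P2.1}), immerses isometrically into $\R^2$ (Proposition~\ref{P2.2}), and compares the resulting, possibly multi-sheeted, planar domains by conformal maps. Your route buys arbitrary topology at the cost of the Gelfand-spectrum machinery.

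Steps~1 and~4 are correct. In Step~1, what forces $du+*dv=0$ is that \emph{both} its normal and its tangential component vanish, one from each of your two conditions. Either condition alone would leave the Neumann, resp.\ Dirichlet, harmonic fields.

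Step~2 has a genuine gap. The data $(\Gamma_j,ds_{g_j})$ fix $\partial_\tau$ only up to sign on each boundary component. What you actually have is $\varphi^*\partial_{\tau_2}=\epsilon\,\partial_{\tau_1}\varphi^*$ with $\epsilon=\pm1$ locally constant on $\Gamma_1$. Your phrase ``conjugate if $\varphi$ reverses orientation'' presupposes that $\epsilon$ is constant, which is not given when $\Gamma_1$ is disconnected. (It is harmless when $\partial M$ is connected, as in the simply connected case the paper cares about.) It must be proved, for example as follows. Suppose $\epsilon=1$ on a component $\Gamma'$ and $\epsilon=-1$ on $\Gamma''$. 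For $F\in\mathcal A_2$, the harmonic extension $\tilde F$ of $\varphi^*F$ to $M_1$ satisfies the Cauchy--Riemann equations along $\Gamma'$ and the conjugate ones along $\Gamma''$. So the antiholomorphic form $\bar\partial\tilde F$ vanishes on $\Gamma'$, and the holomorphic form $\partial\tilde F$ vanishes on $\Gamma''$. Both therefore vanish identically, and every element of $\mathcal A_2$ would be constant, which is absurd.

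Second, the theorem is stated without an orientability assumption, and your reduction to the orientation double cover $\tilde M$ does not work as it stands. $\Lambda_g$ gives the DN map of $\tilde M$ only on deck-invariant boundary data. Its action on anti-invariant data (harmonic sections of the orientation line bundle of $M$) is not supplied by $\Lambda_g$, and you give no way to recover it. As written, you prove the theorem only for orientable surfaces; the non-orientable case calls for an argument with real harmonic functions, as in Lassas--Uhlmann.

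In Step~3, the maximum principle only gives $|\chi(F)|\le\sup_{\Gamma}|F|$. What shows a character $\chi$ is a point evaluation is a B\'ezout property: if $f_1,\dots,f_k\in\mathcal A(M)$ have no common zero on $M$, then $\sum_i g_if_i=1$ for some $g_i\in\mathcal A(M)$. Indeed, if $\chi\ne\delta_p$ for all $p$, compactness yields finitely many $f_i\in\ker\chi$ without a common zero, and then $1=\sum_i g_if_i\in\ker\chi$. The B\'ezout property follows from solvability of $\bar\partial$ smoothly up to $\partial M$, obtained by extending the data to a slightly larger open Riemann surface. Take $\phi_i=\bar f_i/\sum_l|f_l|^2$ and $g_i=\phi_i+\sum_j a_{ij}f_j$, with $a_{ji}=-a_{ij}$ and $\bar\partial a_{ij}=\phi_i\bar\partial\phi_j-\phi_j\bar\partial\phi_i$. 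Working with the smooth algebra $\mathcal A(M_j)$ directly also avoids any density issue for the sup-norm closure.

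Boundary regularity of $\psi$ is then not an obstacle. $F\circ\psi$ is the holomorphic extension of $\varphi^*(F|_{\Gamma_2})\in\mathcal A_1$, hence smooth up to $\Gamma_1$. Choosing $F$ with $dF\ne0$ at $\psi(p)$ gives $\psi=F^{-1}\circ(F\circ\psi)$ near $p$.
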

To the best of our knowledge there are two different proofs of this theorem; one  by Lassas-Uhlmann \cite{LaU} and the other by Belishev
\cite{Be}; see also \cite{Shar}.
These results cover general compact Riemannian surfaces and the proofs are involved. However, for the case when the Riemannian surface is
simply connected,  an elementary proof of this result is presented in \cite{Sr2}.
It is based on the following fact. For a compact Riemannian surface $(M,g)$ with non-empty boundary, there exists a function
$0<\rho\in C^\infty(M)$ such that $\rho|_{\partial M}=1$ and $\rho g$ is a flat metric, i.e., its Gaussian curvature is identically zero.
This implies, in the case of a simply connected $M$, that $(M,\rho g)$ can be isometrically immersed into the Euclidean plane.
This reduces Theorem \ref{Th1.1} to the partial case when both surfaces $M_1$ and $M_2$ are simply connected, possibly multi-sheeted planar
domains and metrics $g_1$ and $g_2$ coincide with the standard Euclidean metric of ${\R}^2$. After this reduction, the theorem is easily
proved by using basic properties of conformal maps.

The current paper deals with the numerical implementation of the aforementioned result \cite{Sr2}, that is, for simply connected compact
Riemannian surfaces. This article is a follow-up of the previous work of Sharafutdinov and  Storozhuk \cite{SharStor} where they considered
the numerical implementation of the reconstruction of Riemannian surfaces $(M,g)$ close to $(\Db,e)$, where $\Db$ is the unit disc in $\Cb$
and $e$ is the Euclidean metric. In contrast, the current work deals with the reconstruction of general simply connected possibly multi-sheeted
domains from the knowledge of the Dirichlet-to-Neumann map.

The reconstruction algorithm in \cite{SharStor} is an iterative scheme: The function $\gamma$ parameterizing the boundary $\Gamma$ of an unknown
domain $\Omega$ satisfies an infinite system of quadratic equations in terms of its Fourier coefficients $\widehat{\gamma}_{n}$.
This infinite system is replaced by a finite system of quadratic equations, which then is solved via an
iteration method. A choice of an initial approximation to the domain to be recovered is crucial. Due to this reason, only those domains
$\Omega$ close enough to the unit disk $\Db$ were considered.

The reconstruction algorithm in this paper is not based on solving a system of quadratic equations as was the case in \cite{SharStor}.
Instead, we use an integral formula for the Hilbert transform $H_{\gamma}$; see \eqref{5.1} for the definition of the Hilbert transform.
Logically, our reconstruction algorithm is more complicated than that of \cite{SharStor} but numerically it is much easier since it
actually consists of a chain of explicit formulas. In particular, our algorithm does not involve any iterative procedure. Unlike \cite{SharStor},
our algorithm works for simply connected multi-sheeted domains as well.

The rest of the paper is organized as follows. In Section \ref{Preliminaries}, we give the requisite theoretical preliminaries,
specifically, the result from \cite{Sr2,Shar} required for the numerical reconstruction  algorithm.

Our main theoretical results are presented in Section \ref{Sec:HT}. The classical Hilbert transform on the unit circle plays an important role
in many inverse problems, see for example \cite{PU,BS,JS2}. To our knowledge, the Hilbert transform ${\mathcal H}_\Omega$
on the boundary curve of an arbitrary planar domain $\Omega$
was first introduced in the recent work \cite{SharStor}. Unexpectedly, ${\mathcal H}_\Omega$ inherits some good properties of the classical
Hilbert transform, see Theorem \ref{Th5.1} below. In particular, we prove an integral formula for ${\mathcal H}_\Omega$ in the case of a
simply connected domain $\Omega$ (Theorem \ref{Th5.3} below) which generalizes the well known integral formula for the classical Hilbert
transform. This formula serves as the basis of our algorithm for numerical solution of the Calder\'{o}n problem
presented in Section 4. Several numerical examples are presented in Section \ref{numerical reconstructios}.

We are grateful to Venky Krishnan for his help with preparing the paper.

\section{Preliminaries}\label{Preliminaries}
In this section, we summarize some results from \cite{Sr2,Shar}. We recall that a Riemannian metric $g$  on a two-dimensional
manifold is said to be a flat metric if its Gaussian curvature is identically zero.

\begin{proposition} {\cite[Lemma 2.1]{Sr2},\cite[Proposition 3.1]{Shar}}\label{P2.1}
Let $(M,g)$ be a compact Riemannian surface with non-empty boundary. There exists a unique positive function $\rho\in C^\infty(M)$
satisfying the boundary condition $\rho|_{\partial M}=1$ and such that $\rho g$ is a flat metric.
\end{proposition}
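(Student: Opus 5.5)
Write $\rho=e^{2\varphi}$ with $\varphi\in C^\infty(M)$. In dimension two the Gaussian curvature transforms under a conformal change of metric by
$$
K_{e^{2\varphi}g}=e^{-2\varphi}\bigl(K_g-\Delta_g\varphi\bigr),
$$
with $\Delta_g$ the Laplace--Beltrami operator as defined in the introduction. Since $e^{-2\varphi}>0$, the metric $\rho g$ is flat if and only if $\varphi$ solves
$$
\Delta_g\varphi=K_g\quad\mbox{in}\quad M,\qquad \varphi|_{\partial M}=0,
$$
where the boundary condition is exactly $\rho|_{\partial M}=1$. So the statement reduces to unique solvability of a linear Dirichlet problem for the Poisson equation on $(M,g)$. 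This is a linear equation, not the nonlinear prescribed-curvature problem, because the target curvature is zero.

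\emph{Existence.} The operator $\Delta_g$ is uniformly elliptic with smooth coefficients on the compact manifold $M$, and $\partial M$ is smooth. Its Dirichlet realization is invertible by the maximum principle, or equivalently by Lax--Milgram on $H^1_0(M)$ via the energy $\int_M|\nabla\varphi|_g^2\,dV_g$. Together with Poincar\'e's inequality this gives a weak solution for the smooth right-hand side $K_g$. Elliptic regularity up to the boundary, applied iteratively in Sobolev spaces $H^k$, then gives $\varphi\in C^\infty(M)$. Setting $\rho=e^{2\varphi}$ produces a positive smooth function with $\rho|_{\partial M}=1$ and $K_{\rho g}\equiv0$.

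\emph{Uniqueness.} Suppose $\rho_1,\rho_2$ are two such functions. Write $\rho_j=e^{2\varphi_j}$, which is possible since $\rho_j>0$. By the equivalence above, both $\varphi_j$ solve the same Dirichlet problem. Their difference is $g$-harmonic with zero boundary values, hence vanishes by the maximum principle (or by integrating $|\nabla(\varphi_1-\varphi_2)|^2$ by parts). So $\rho_1=\rho_2$.

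The only step needing real care is the curvature transformation formula, in particular its sign convention. I would verify it in isothermal local coordinates, where $g=e^{2\lambda}(dx^2+dy^2)$ and $K_g=-e^{-2\lambda}\Delta_0\lambda$ with $\Delta_0$ the Euclidean Laplacian. The identity then follows immediately from $\Delta_g=e^{-2\lambda}\Delta_0$. Isothermal coordinates exist locally, and local verification suffices because the formula is pointwise. Everything else is standard linear elliptic theory.
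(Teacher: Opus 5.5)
Your proof is correct and is the standard argument behind this proposition; the paper itself gives no proof and only cites \cite{Sr2,Shar}. Writing $\rho=e^{2\varphi}$, the formula $K_{e^{2\varphi}g}=e^{-2\varphi}(K_g-\Delta_g\varphi)$ turns flatness into the linear Dirichlet problem $\Delta_g\varphi=K_g$, $\varphi|_{\partial M}=0$, whose unique smooth solvability gives both existence and uniqueness, and your sign convention matches the paper's $\Delta_g=g^{ij}\nabla_i\nabla_j$ (at boundary points the curvature identity follows by continuity from the interior).
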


\begin{proposition} {\cite[Proposition 2.2]{Sr2}}\label{P2.2}
A simply connected flat Riemannian surface $(M,g)$ admits a locally isometric immersion into the Euclidean plane, i.e., there exists a smooth map
\begin{equation}
I:M\to{\R}^2
                                \label{2.4}
\end{equation}
such that, for every point $x\in M$, the differential $d_xI:T_xM\to{\R}^2$ is an isometry of the tangent space $T_xM$ furnished with the dot
product $\langle\cdot,\cdot\rangle_g$ onto ${\R}^2$ furnished with the standard Euclidean dot product.
If additionally the surface $M$ is oriented and the map \eqref{2.4} is assumed to transform the chosen orientation of $M$ to the standard
orientation of ${\R}^2$, then the immersion $I$ is unique up to the composition with a shift and rotation of the plane.
\end{proposition}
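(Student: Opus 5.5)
The plan is to build the immersion by integrating a parallel orthonormal frame, which exists globally because $M$ is flat and simply connected.

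Fix a point $x_0\in M$ and an orthonormal frame $(e_1,e_2)$ of $T_{x_0}M$. Flatness means the Levi-Civita connection has zero curvature. Hence parallel transport along a curve depends only on the homotopy class of the curve with fixed endpoints. Since $M$ is simply connected, transporting $(e_1,e_2)$ along any path from $x_0$ gives a well-defined global smooth orthonormal frame field $(E_1,E_2)$ with $\nabla E_i=0$. If $M$ is oriented, we choose $(e_1,e_2)$ positively oriented, and then $(E_1,E_2)$ is positive everywhere.

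Let $(\theta^1,\theta^2)$ be the dual coframe, so $g=(\theta^1)^2+(\theta^2)^2$. Since the frame is parallel and the connection is torsion-free,
$$d\theta^i(X,Y)=X\theta^i(Y)-Y\theta^i(X)-\theta^i([X,Y])=\theta^i(\nabla_XY-\nabla_YX-[X,Y])=0 .$$
So each $\theta^i$ is a closed $1$-form. By simple connectedness, $H^1_{dR}(M)=0$; this holds also with boundary, for instance because $M$ deformation retracts onto its interior. Therefore $\theta^i=dI^i$ for smooth functions $I^i$, unique up to additive constants. Define $I=(I^1,I^2):M\to\R^2$. Then $d_xI$ sends $E_i(x)$ to the standard basis vector $\epsilon_i$. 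This map carries an orthonormal basis to an orthonormal basis, so it is an isometry $T_xM\to\R^2$; in particular $I$ is an immersion. In the oriented case it preserves orientation by our choice of frame. This proves existence.

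For uniqueness, let $J$ be another orientation-preserving local isometry. The frames $F_i=(dJ)^{-1}\epsilon_i$ are orthonormal. They are also parallel, because a local isometry intertwines the Levi-Civita connection of $g$ with the flat connection of $\R^2$, where the constant fields $\epsilon_i$ are parallel. Two parallel positively oriented orthonormal frames on a connected manifold differ by a constant rotation $R\in SO(2)$: the transition function is parallel, hence locally constant, hence constant. It follows that $dJ=R\circ dI$, so $d(J-RI)=0$. By connectedness $J=RI+c$, which is the claimed uniqueness up to a rotation and a shift.

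The main obstacle is justifying path-independence of parallel transport, which is where flatness enters. I would argue via holonomy: zero curvature makes the restricted holonomy group trivial (Ambrose–Singer, or directly because in local coordinates the parallel frame equation is a Frobenius-integrable system). Simple connectedness then makes the full holonomy trivial. A second, minor care point is the boundary. I would handle it by working in collar charts, or by extending $g$ slightly beyond $\partial M$ to a flat metric on a slightly larger open surface.
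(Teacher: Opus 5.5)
The paper does not prove this proposition. It is quoted from \cite{Sr2} and used as a black box, so there is no internal argument to compare yours with.

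On its own terms, your proof is correct and essentially complete. It is the standard developing-map argument:
\begin{itemize}
\item flatness plus $\pi_1(M)=0$ gives a global parallel orthonormal frame;
\item torsion-freeness makes the dual coframe closed;
\item integrating the coframe gives $I$ with $d_xI(E_i)=\epsilon_i$;
\item uniqueness holds because the transition between two parallel, positively oriented orthonormal frames is a constant element of $SO(2)$.
\end{itemize}
It uses only Frobenius/holonomy and elementary topology, and it works verbatim in any dimension.

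A route closer to the rest of the paper is complex-analytic. Since $M$ is a closed disk, uniformization gives a global conformal coordinate $z\in\mathbb{D}$ with $g=e^{2u}|dz|^2$. Flatness means $\Delta u=0$, so $u=\operatorname{Re} f$ with $f$ holomorphic, and $I(z)=\int_0^z e^{f(\zeta)}\,d\zeta$ is the immersion. For any other orientation-preserving local isometry $J$, the quotient $J'/I'$ is holomorphic of modulus one, hence a constant $e^{i\alpha}$, so $J=e^{i\alpha}I+c$. That route needs the much heavier uniformization theorem with boundary regularity. In exchange it directly produces the biholomorphism $\Phi$ and the conformal factor $|\Phi'|$ that the paper exploits in Section~\ref{Sec:HT} and in \eqref{7.51}. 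Yours is lighter and self-contained.

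Two small points to tighten.
\begin{itemize}
\item Extending $g$ across $\partial M$ as a flat metric is true but not automatic. In Fermi coordinates $(t,r)$ along $\partial M$, the Jacobi equation with $K\equiv0$ forces $g=dr^2+(1-\kappa(t)r)^2dt^2$, where $\kappa$ is the geodesic curvature of $\partial M$. This formula visibly extends flatly to small $r<0$. Simpler still, every step you use works directly on a manifold with boundary: local parallel frames (integrate along rays in a half-disk chart), homotopy invariance of parallel transport, and path-independence of $\int\theta^i$.
\item $M$ cannot deformation retract onto its non-closed interior, since no retraction onto a dense proper subset exists. What is true is that the inclusion of the interior is a homotopy equivalence. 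It is cleaner to define $I^i(x)=\int_{x_0}^x\theta^i$ along any path and invoke Stokes on a homotopy.
\end{itemize}
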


\begin{figure}[h]
\center{\includegraphics[width=3.0in,keepaspectratio]{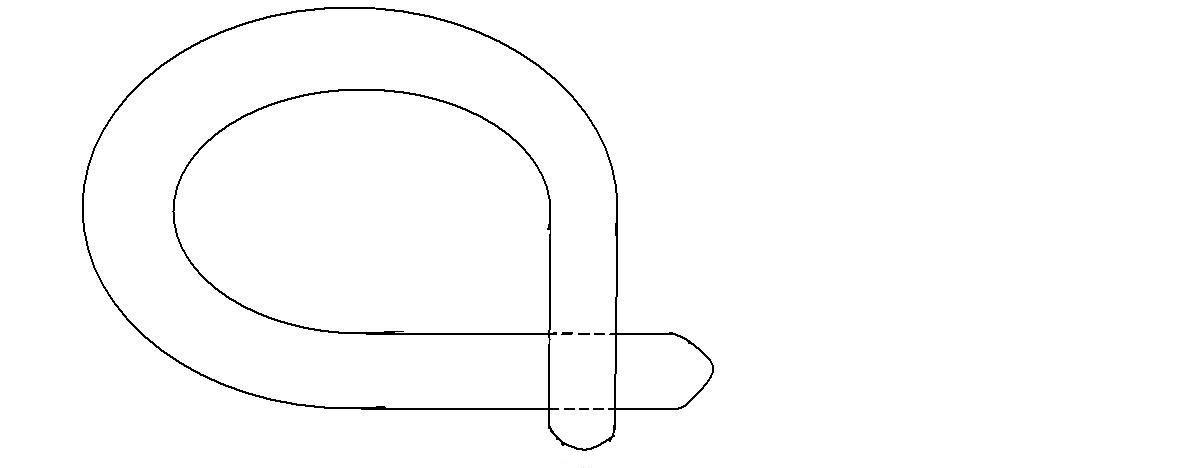}}
\caption{Example of multi-sheeted domain}
    \label{multisheet}
\end{figure}

Based on the above two results, we consider  the Calder\'{o}n problem for a compact simply connected flat Riemannian surface $(M,g)$ with a
non-empty boundary. In such a case $(M,g)$ can be identified with the closed simply connected, probably multi-sheeted,
planar domain
$\Omega=I(M)\subset{\mathbb C}$ bounded by the smooth closed curve $\Gamma=I(\partial M)$ and
furnished with the standard Euclidean metric.

An example of multi-sheeted domain is presented on Figure 1. For such domains, the definition of spaces $C^\infty(\Omega)$ and
$C^\infty(\Gamma)$ should be slightly modified. Roughly speaking, a function $f\in C^\infty(\Omega)$ can take different values at different
copies of a multiple point $x\in\Omega$. We omit obvious details of the modification.

Taking into account the identification based on \eqref{2.4}, we let $\Omega\subset \mathbb{R}^2$ be a closed simply connected, probably
multi-sheeted, domain bounded
by a smooth closed curve $\Gamma$. We assume $\Gamma$ is given the usual anticlockwise orientation. Recall that the metric on $\Omega$ is the
standard Euclidean metric. Let
$$
\Lambda_\Omega:C^\infty(\Gamma)\to C^\infty(\Gamma)
$$
be the DN-map of $\Omega$.
We emphasize that $\Gamma$ is unknown to us with the only known information being its length, which we assume to be $2\pi$. Let
${\mathbb S}=\{e^{i s}\mid  s\in{\R}\}$ be the unit circle oriented in the anticlockwise direction. For a function $u\in C^\infty({\mathbb S})$,
we write $u(s)$ instead of $u(e^{i s})$.
We parameterize the curve $\Gamma$ by the arc-length $s$ measured in the positive direction from an initial point.
The parametrization determines a smooth $2\pi$-periodic function
$\gamma:{\R}\to {\mathbb C}$ satisfying
$$
\Big|\frac{d\gamma( s)}{d s}\Big|=1.
$$
Treating $\g$ as the diffeomorphism $\Sb \to \Gamma$ preserving the arc-length and orientation, we define
\begin{equation}
\Lambda_\gamma=\gamma^* \Lambda_\Omega\gamma^* {}^{-1}:C^\infty({\mathbb S})\to C^\infty({\mathbb S}).
                                \label{3.4}
\end{equation}
We call $ \Lambda_\gamma$ {\it the DN-map of the domain $\Omega$ in the natural parametrization}. We are now ready to state the main inverse
problem, which we repeat from \cite{SharStor} for the sake of completeness.

\begin{problem} \label{Pr3.1}
Let
$\Omega\subset{\mathbb C}$ be a closed simply connected, probably multi-sheeted, domain  bounded by a closed smooth curve
$\Gamma=\partial\Omega$ of length $2\pi$.
 Given the operator \eqref{3.4}, the goal is to recover the domain
$\Omega$ up to a shift and rotation.
\end{problem}

\section{The Hilbert transform}\label{Sec:HT}
Again, let $\Omega\subset{\mathbb C}={\R}^2$ be a closed simply connected, probably
multi-sheeted, domain  bounded by a closed smooth curve $\Gamma=\partial\Omega$
of length $2\pi$ and let $\gamma:{\mathbb S}\to\Gamma$ be a diffeomorphism preserving the arc-length and orientation.

Let $D=-i\frac{d}{ds}:C^\infty(\Gamma)\to C^\infty(\Gamma)$ be the differentiation with respect to the arc-length in positive direction
($i$ is the imaginary unit). Recall that $C_0^\infty(\Gamma)=\{f\in C^\infty(\Gamma)\mid\int_\Gamma f\,ds=0\}$. The restriction
$D:C_0^\infty(\Gamma)\to C_0^\infty(\Gamma)$ is the isomorphism. Let $D^{-1}:C_0^\infty(\Gamma)\to C_0^\infty(\Gamma)$ be the inverse isomorphism.
Thus, for $f\in C_0^\infty(\Gamma)$, $D^{-1}f$ is the unique anti-derivative with zero mean value.

The operator
\begin{equation}
{\mathcal H}_\Gamma=D^{-1} \Lambda_\Omega:C^\infty(\Gamma)\to C^\infty(\Gamma)
                                \label{5.0}
\end{equation}
is well defined since the range of $\Lambda_\Omega$ coincides with $C_0^\infty(\Gamma)$.
It will be called {\it the Hilbert transform on the curve $\Gamma$} while the operator
\begin{equation}
{\mathcal H}_\gamma=\gamma^* {\mathcal H}_\Gamma\gamma^{* -1}:C^\infty({\mathbb S})\to C^\infty({\mathbb S})
                                \label{5.1}
\end{equation}
will be called {\it the Hilbert transform of the curve $\Gamma$ in the natural parametrization}. It is a zero order pseudodifferential
operator.

We denote by ${\mathcal A}(\Omega)$ the subspace of $C^\infty(\Omega)$ consisting of functions
$f\in C^\infty(\Omega)$ such that the restriction of $f$ to the interior $\Omega\setminus\Gamma$ is a holomorphic function.
(The space ${\mathcal A}(\Omega)$ was denoted by ${\mathcal A}^\infty(\Omega)$ in \cite{Be} and by
${\mathcal A}^\infty(M,g)$ in \cite{Shar}. It has a Banach algebra structure that is not used in the current work.)
Elements of ${\mathcal A}(\Omega)$ will be called {\it holomorphic functions} on $\Omega$.
We also introduce the space
$$
{\mathcal A}(\Gamma)=\{w|_{\Gamma}\mid w\in{\mathcal A}(\Omega)\}
$$
of {\it boundary traces of holomorphic functions}.

\begin{proposition}\cite[Lemma 1]{Be}, \cite[Prop. 6.1]{Shar} \label{P4.1}
Let $\Omega\subset{\mathbb C}$ be a closed simply connected, probably
multi-sheeted, domain  bounded by a closed smooth curve $\Gamma=\partial\Omega$. Then

{\rm (1)} The equality
\begin{equation}
\Big(1-(\Lambda_\Omega D^{-1})^2\Big)Df=0,
                                \label{4.1}
\end{equation}
holds for every $f\in C^\infty(\Gamma)$.

{\rm (2)}
For $f,g\in C^\infty(\Gamma)$, the function $f+ig$ belongs to ${\mathcal A}(\Gamma)$ if and only if
\begin{equation}
g=-iD^{-1}\Lambda_\Omega f+c
                                \label{4.2}
\end{equation}
with a constant $c$.
\end{proposition}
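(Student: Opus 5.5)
Both parts come from the Cauchy--Riemann equations written along the boundary, together with unique solvability of the Dirichlet problem on $\Omega$; multi-sheetedness does no harm, since everything is local on $\Omega$ or global along $\Gamma$. I will prove (2) first and then deduce (1) from it.

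\emph{Setup.} Let $\nu$ be the outer unit normal and $\tau$ the unit tangent in the positive direction, so that $(\nu,\tau)$ is positively oriented. For a holomorphic $w=u+iv$ on $\Omega$, the Cauchy--Riemann equations in this frame read
\[
\partial_\nu u=\partial_\tau v,\qquad \partial_\tau u=-\partial_\nu v\quad\text{on }\Gamma .
\]
Write $d/ds=iD$.

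\emph{Part (2), the ``only if'' direction.} Suppose $f+ig=w|_\Gamma$ with $w=u+iv\in{\mathcal A}(\Omega)$, where $u,v$ are complex valued in general. Since $u$ and $v$ are harmonic, $\Lambda_\Omega f=\partial_\nu u=\partial_s v=iDg$. Hence $Dg=-i\Lambda_\Omega f$. Applying $D^{-1}$ on $C_0^\infty(\Gamma)$ determines $g$ up to a constant, which is exactly \eqref{4.2}. The only thing to check is that $\partial_\nu u=\partial_\tau v$ still holds for complex-valued $u,v$. This is true because the Cauchy--Riemann equations $\bar\partial w=0$ are complex linear, so one can split $w=w_1+iw_2$ into real harmonic parts and apply the real statement to each.

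\emph{Part (2), the ``if'' direction.} Let $u$ be the harmonic extension of $f$. On the simply connected (possibly multi-sheeted) $\Omega$, let $v$ be a harmonic conjugate of $u$; it exists because the form $-u_y\,dx+u_x\,dy$ is closed on the simply connected surface $M$. Then $w=u+iv\in{\mathcal A}(\Omega)$. By the first direction, $v|_\Gamma=-iD^{-1}\Lambda_\Omega f+c'$. Adding the constant $i(c-c')$ to $w$ gives $f+ig$.

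\emph{Part (1).} It suffices to check \eqref{4.1} on real $f$ and extend by linearity. Set $h=D^{-1}\Lambda_\Omega f$, which by the above is $i$ times the trace $v|_\Gamma$ up to a constant. Since $-iw=v-iu$ is holomorphic, part (2) applied to the pair $(v|_\Gamma,-f)$ gives
\[
-f=-iD^{-1}\Lambda_\Omega v+\text{const}.
\]
Hence $\Lambda_\Omega h=i\Lambda_\Omega v=iD(-if)=Df$, the constant being killed by $D$. Now
\[
(\Lambda_\Omega D^{-1})^2Df=\Lambda_\Omega D^{-1}\Lambda_\Omega f=\Lambda_\Omega h=Df,
\]
which is \eqref{4.1}.

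\emph{Main obstacle.} The delicate points are the orientation and sign conventions (outer normal, anticlockwise $\Gamma$, $D=-i\,d/ds$) and the existence of a global harmonic conjugate on a multi-sheeted $\Omega$. The latter I will justify by pulling back to the simply connected surface $M$ via the immersion $I$ of Proposition~\ref{P2.2}.
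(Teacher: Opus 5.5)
The paper gives no proof of Proposition~\ref{P4.1}; it only refers to Belishev and Sharafutdinov. Your argument is the standard one. It uses the boundary Cauchy--Riemann relations in the frame $(\nu,\tau)$ with $d/ds=iD$, builds a global harmonic conjugate on the simply connected surface $M$, and obtains part (1) by applying part (2) to both $w$ and $-iw$. The signs are right.

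There is, however, one false step: the assertion that $\partial_\nu u=\partial_\tau v$ ``still holds for complex-valued $u,v$''. For complex-valued harmonic $u,v$ the splitting $w=u+iv$ is not unique, and the boundary relation fails. For example, on $\Omega={\mathbb D}$ with $\gamma$ the identity, take $w(z)=z$, $u=w$, $v=0$. Then $\partial_\nu u=e^{is}\neq 0=\partial_\tau v$. Your justification does not repair this. The real and imaginary parts of $w$ are not holomorphic, so there is no real statement to apply to each of them. The same example shows that part (2) is false for complex-valued $f,g$. With $f=e^{is}$, $g=0$ we have $f+ig\in{\mathcal A}({\mathbb S})$, while $-iD^{-1}\Lambda_\Omega f=-ie^{is}$ is not constant.

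So (2) must be read for real-valued $f,g$. This is also how the paper uses it: the description ${\mathcal A}(\Gamma)=\{f+D^{-1}\Lambda_\Omega f\mid f\in C^\infty(\Gamma)\}$ for complex $f$ follows from the real case by complex linearity. Delete the complex-valued remark and state the restriction to real $f,g$ explicitly; then your proof is complete. In the ``if'' direction, the harmonic extension $u$ of a real $f$ and its conjugate $v$ are real. In part (1) you already reduce to real $f$, so the two pairs $(f,v|_\Gamma)$ and $(v|_\Gamma,-f)$ to which you apply (2) are real.
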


Observe that
$$
D=\gamma^* D\gamma^* {}^{-1},\quad
D^{-1}=\gamma^* D^{-1}\gamma^* {}^{-1}.
$$
On the right-hand side of the last equality, $D^{-1}:C^\infty_0({\mathbb S})\to C^\infty_0({\mathbb S})$ is the inverse operator of
$$
D=-i\frac{d}{d s}: C^\infty_0({\mathbb S})\to
C^\infty_0({\mathbb S})=\Big\{f\in  C^\infty({\mathbb S})\mid\int_0^{2\pi}f(s)\,ds=0\Big\}.
$$

Since $D^{-1}Df=f+\mbox{const}$ and constants lie in the kernel of $\Lambda_\Omega$, \eqref{4.1} can be written as follows:
$Df=\Lambda_\Omega D^{-1}\Lambda_\Omega f$ for $f\in C^\infty(\Gamma)$. This implies
\begin{equation}
Df=\Lambda_\gamma D^{-1}\Lambda_\gamma f\quad\mbox{for every} \ f\in C^\infty({\mathbb S}).
                                \label{4.4}
\end{equation}
Write \eqref{4.2} in the form
$$
f+ig=f+D^{-1}\Lambda_\Omega f+ic=(f+ic)+D^{-1}\Lambda_\Omega(f+ic).
$$
Setting $f'=f+ic$, we have $f+ig=f'+D^{-1}\Lambda_\Omega f'$.
The second statement  of Propositions \ref{P4.1} becomes
$$
{\mathcal A}(\Gamma)=\{f+D^{-1}\Lambda_\Omega f\mid f\in C^\infty(\Gamma)\}.
$$
The operators $D^{-1}$ and $\gamma^* $ commute since $\gamma$ preserves the arc-length. Therefore the previous formula implies
\begin{equation}
{\mathcal A}(\gamma)=\{f+D^{-1}\Lambda_\gamma f\mid f\in C^\infty({\mathbb S})\}.
                                \label{4.6}
\end{equation}
Finally, we define
$$
{\mathcal A}(\gamma)=\{\gamma^* f\mid f\in{\mathcal A}(\Gamma)\}.
$$
Main properties of the Hilbert transform ${\mathcal H}_\gamma$ are listed in the following theorem.

\begin{theorem} \label{Th5.1}
Let $\Omega\subset{\mathbb C}$ be a simply connected, probably
multi-sheeted, domain  bounded by a smooth closed curve $\Gamma$ of length $2\pi$ and let
$\gamma:{\mathbb S}\to\Gamma$ be a diffeomorphism preserving the arc-length and orientation.
The Hilbert transform \eqref{5.1} has the following properties.

{\rm (1)} The kernel of ${\mathcal H}_\gamma$ is the one-dimensional space consisting of constant functions.

{\rm (2)} The restriction of ${\mathcal H}_\gamma$ to $C_0^\infty({\mathbb S})=\{f\in C^\infty({\mathbb S})\mid\int_0^{2\pi}f\,ds=0\}$
$$
{\mathcal H}_\gamma|_{C_0^\infty({\mathbb S})}:C_0^\infty({\mathbb S})\to C_0^\infty({\mathbb S})
$$
satisfies
\begin{equation}
({\mathcal H}_\gamma|_{C_0^\infty({\mathbb S})})^2=1,
                                \label{5.2}
\end{equation}
where 1 is the identity operator.

{\rm (3)} Eigenvalues of ${\mathcal H}_\gamma|_{C_0^\infty({\mathbb S})}$ are $+1$ and $-1$.
The eigenspace of ${\mathcal H}_\gamma$ corresponding to the eigenvalue $+1$ is
${\mathcal A}_0(\gamma)={\mathcal A}(\gamma)\cap C_0^\infty({\mathbb S})$.
The eigenspace of ${\mathcal H}_\gamma$ corresponding to the eigenvalue $-1$ is
$\overline{{\mathcal A}_0(\gamma)}=\{\overline f\mid f\in{\mathcal A}_0(\gamma)\}$. The following decomposition holds
\begin{equation}
C_0^\infty({\mathbb S})={\mathcal A}_0(\gamma)\oplus\overline{{\mathcal A}_0(\gamma)}.
                                \label{5.3}
\end{equation}
Summands on the right-hand side of \eqref{5.3} are orthogonal with respect to the sesquilinear form
\begin{equation}
\langle f,g\rangle_\gamma=\int\limits_0^{2\pi}f(s)\overline{g(s)}\,\frac{d\gamma}{ds}\,ds.
                                \label{5.4}
\end{equation}
\end{theorem}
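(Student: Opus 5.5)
The plan is to work on $\Gamma$ with ${\mathcal H}_\Gamma=D^{-1}\Lambda_\Omega$ and transfer every statement to ${\mathcal H}_\gamma$ by conjugating with $\gamma^*$. This transfer is harmless because $\gamma^*$ commutes with $D$ and $D^{-1}$ and preserves $C_0^\infty$, since $\gamma$ preserves arc-length.

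\emph{Statement (1).} Since $D^{-1}$ is injective on $C_0^\infty$, the kernel of ${\mathcal H}_\Gamma$ equals the kernel of $\Lambda_\Omega$, which consists of the constants.

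\emph{Statement (2).} By \eqref{4.4}, $Df=\Lambda_\gamma D^{-1}\Lambda_\gamma f$ for every $f$. Applying $D^{-1}$ gives, for $f\in C_0^\infty({\mathbb S})$,
$$
f=D^{-1}Df=D^{-1}\Lambda_\gamma D^{-1}\Lambda_\gamma f={\mathcal H}_\gamma^2f .
$$
The range of ${\mathcal H}_\gamma$ lies in $C_0^\infty$, so the restriction is well defined and \eqref{5.2} follows.

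\emph{Statement (3).} From \eqref{5.2} the operators $P_\pm=\frac12(1\pm{\mathcal H}_\gamma)$ are complementary projectors on $C_0^\infty({\mathbb S})$. Hence $C_0^\infty$ is the direct sum of the $\pm1$ eigenspaces. Both are nonzero: $\gamma-\frac{1}{2\pi}\int\gamma\,ds$ lies in ${\mathcal A}_0(\gamma)$, and its conjugate lies in the other space. So the eigenvalues are exactly $\pm1$.

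To identify the $+1$ eigenspace, use \eqref{4.6}. Every element of ${\mathcal A}(\gamma)$ has the form $f+{\mathcal H}_\gamma f$. If $f$ has mean $c$, write $f=c+f_0$; then $f+{\mathcal H}_\gamma f=c+(1+{\mathcal H}_\gamma)f_0$. Such an element has zero mean only when $c=0$, and then it equals $2P_+f_0$. Hence ${\mathcal A}_0(\gamma)=\operatorname{Ran}P_+$, which is the $+1$ eigenspace.

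For the $-1$ eigenspace, I need ${\mathcal H}_\gamma$ to commute with complex conjugation. The operator $\Lambda_\Omega$ is real, since the Dirichlet problem with real data has a real solution. The operator $D^{-1}=(-i\,d/ds)^{-1}$ anticommutes with conjugation, because it is $i$ times a real antiderivative. So ${\mathcal H}_\gamma\overline f=-\overline{{\mathcal H}_\gamma f}$. This maps the $+1$ eigenspace onto the $-1$ eigenspace, giving $\overline{{\mathcal A}_0(\gamma)}$ and the decomposition \eqref{5.3}.

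\emph{Orthogonality.} This is the step needing real content. Take $f,h\in{\mathcal A}_0(\gamma)$; I must show $\langle f,\overline h\rangle_\gamma=\int_0^{2\pi}f(s)h(s)\frac{d\gamma}{ds}\,ds=0$. Write $f=F\circ\gamma$ and $h=G\circ\gamma$ with $F,G\in{\mathcal A}(\Omega)$. Then the integral is $\oint_\Gamma F(z)G(z)\,dz$, because $dz=\frac{d\gamma}{ds}ds$ along $\Gamma$. Here $FG$ is holomorphic on $\Omega$ and smooth up to the boundary, so by Cauchy's theorem the integral vanishes.

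The main obstacle is justifying Cauchy's theorem on a possibly multi-sheeted domain. I would pull everything back via the immersion $I$ of Proposition \ref{P2.2}. On $M$, the form $(F\circ I)(G\circ I)\,dI$ is a closed holomorphic $1$-form, being locally $\phi\,dz$ with $\phi$ holomorphic. Stokes' theorem on $M$ then gives $\int_{\partial M}=\int_M d(\cdots)=0$, and this requires no single-sheetedness.
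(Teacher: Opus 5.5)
Your proof is correct and follows essentially the same route as the paper. You get the square identity from \eqref{4.4} and identify ${\mathcal A}_0(\gamma)$ with the range of $1+{\mathcal H}_\gamma$ on $C_0^\infty({\mathbb S})$ via \eqref{4.6}; your projectors $P_\pm$ are a cleaner packaging of the paper's computation of $(1+D^{-1}\Lambda_\gamma)^2$ and its splitting $f=\frac12(f+{\mathcal H}_\gamma f)+\frac12(f-{\mathcal H}_\gamma f)$. You then use complex conjugation for the $-1$ eigenspace and Cauchy's theorem for orthogonality, as the paper does, while also supplying details it leaves implicit: both eigenspaces are nonzero (via $\gamma$ minus its mean), $\overline{{\mathcal H}_\gamma f}=-{\mathcal H}_\gamma\overline f$ is justified, and Stokes' theorem on $M$ handles the multi-sheeted case.
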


{\bf Remark.} The form \eqref{5.4} is not Hermitian, i.e., $\langle f,g\rangle_\gamma\neq\overline{\langle g,f\rangle_\gamma}$ since the
factor $\frac{d\gamma}{ds}$ is not real.

\begin{proof}
The kernel of $\Lambda_\Omega$ consists of constant functions while its range coincides with $C_0^\infty(\Gamma)$, the operator
$D^{-1}:C_0^\infty(\Gamma)\to C_0^\infty(\Gamma)$ is injective. Therefore the kernel of ${\mathcal H}_\Gamma=D^{-1}\Lambda_\Omega$ consists of
constant functions. This implies statement (1).

Let us reproduce the equation \eqref{4.1} in the form
$$
\Lambda_\Omega D^{-1}\Lambda_\Omega D^{-1}Df=Df\quad(f\in C^\infty(\Gamma)).
$$
Since $D^{-1}Df=f$ for $f\in C_0^\infty(\Gamma)$, this can be written as
$$
\Lambda_\Omega D^{-1}\Lambda_\Omega f=Df\quad(f\in C_0^\infty(\Gamma)).
$$
Applying the operator $D^{-1}$ to this equation and using $D^{-1}Df=f$ again, we obtain
$$
(D^{-1}\Lambda_\Omega)^2 f=f\quad(f\in C_0^\infty(\Gamma)).
$$
This can be written in terms of ${\mathcal H}_\Gamma$ as follows:
${\mathcal H}_\Gamma^2 f=f$ for $f\in C_0^\infty(\Gamma)).$
This implies statement (2).

We compute the square of $1+D^{-1} \Lambda_\gamma$
\begin{equation}
(1+D^{-1} \Lambda_\gamma)^2=1+2D^{-1} \Lambda_\gamma+D^{-1}(\Lambda_\gamma D^{-1} \Lambda_\gamma).
                                \label{5.5}
\end{equation}
By \eqref{4.4}, $\Lambda_\gamma D^{-1} \Lambda_\gamma=D$.
Substitute this expression into \eqref{5.5}
\begin{equation}
(1+D^{-1} \Lambda_\gamma)^2=1+2D^{-1} \Lambda_\gamma+D^{-1}D.
                                \label{5.6}
\end{equation}

Let $g\in{\mathcal A}(\gamma)$. By \eqref{4.6}, $g$ can be represented in the form
\begin{equation}
g=(1+D^{-1} \Lambda_\gamma)f
                                \label{5.7}
\end{equation}
with some $f\in C^\infty({\mathbb S})$. Hence
$
(1+D^{-1} \Lambda_\gamma)g=(1+D^{-1} \Lambda_\gamma)^2f.
$
With the help of \eqref{5.6}, this gives
\begin{equation}
(1+D^{-1}\Lambda_\gamma)g=f+2D^{-1} \Lambda_\gamma f+D^{-1}Df.
                                \label{5.8}
\end{equation}
If $g$ has the zero mean value, then $f$ has also the zero mean value, as is seen from \eqref{5.8}. For such functions,
$D^{-1}Df=f$ and the formula \eqref{5.8} takes the form
$$
(1+D^{-1}\Lambda_\gamma)g=2(1+D^{-1} \Lambda_\gamma)f.
$$
Together with \eqref{5.7}, this gives $(1+D^{-1}\Lambda_\gamma)g=2g$, i.e., ${\mathcal H}_\gamma g=D^{-1}\Lambda_\gamma g=g$. Thus, every
function $g\in{\mathcal A}_0(\gamma)$ is an eigenfunction of ${\mathcal H}_\gamma$ with the eigenvalue +1.

Conversely, assume a function $g\in C^\infty({\mathbb S})$ to satisfy ${\mathcal H}_\gamma g=D^{-1}\Lambda_\gamma g=g$. Then, first of all,
$g\in C_0^\infty({\mathbb S})$ since the range of $\Lambda_\gamma$ coincides with $C_0^\infty({\mathbb S})$ and $D^{-1}$ maps
$C_0^\infty({\mathbb S})$ onto itself. We write the equation $D^{-1}\Lambda_\gamma g=g$ in the form
$
2g=(1+D^{-1} \Lambda_\gamma)g.
$
By \eqref{4.6}, this means that $2g\in{\mathcal A}(\gamma)$. Therefore $g\in{\mathcal A}_0(\gamma)$.

We have thus proven that the eigenspace of ${\mathcal H}_\gamma$ corresponding to the eigenvalue $+1$ is
${\mathcal A}_0(\gamma)$. Using the identity $\overline{{\mathcal H}_\gamma f}=-{\mathcal H}_\gamma\overline f$, we obtain the dual statement:
the eigenspace of ${\mathcal H}_\gamma$ corresponding to the eigenvalue $-1$ is
$\overline{{\mathcal A}_0(\gamma)}$.

Every function $f\in C_0^\infty({\mathbb S})$ can be represented in the form
$$
f=\frac{1}{2}(f+{\mathcal H}_\gamma f)+\frac{1}{2}(f-{\mathcal H}_\gamma f).
$$
By \eqref{5.2}, the first summand on the right-hand side is the eigenvector of ${\mathcal H}_\gamma$ with the eigenvalue +1 and the second
summand is the eigenvector of ${\mathcal H}_\gamma$ with the eigenvalue -1. As we have already proven, this means that the first (second)
summand belongs to ${\mathcal A}_0(\gamma)$ (belongs to $\overline{{\mathcal A}_0(\gamma)}$). This proves \eqref{5.3}.

Finally, we prove that ${\mathcal A}_0(\gamma)$ and $\overline{{\mathcal A}_0(\gamma)}$ are orthogonal with respect to the form
\eqref{5.4}. Given two functions $f_1, f_2\in{\mathcal A}_0(\gamma)$, we have
$$
\langle f_1,\overline{f_2}\rangle_\gamma=\int\limits_0^{2\pi}f_1(s)f_2(s)\,\frac{d\gamma}{ds}\,ds.
$$
Changing the integration variable by $z=\gamma(s)$, we obtain
$$
\langle f_1,\overline{f_2}\rangle_\gamma=\int\limits_\Gamma g_1(z)g_2(z)\,dz,
$$
where $g_i=\gamma^{* -1}f_i\in{\mathcal A}_0(\Gamma)\ (i=1,2)$. The product $g_1g_2$ extends to a function belonging to
${\mathcal A}(\Omega)$. Therefore the last integral is equal to zero by the Cauchy theorem.
\end{proof}

{\bf Remark.} Theorem \ref{Th5.1} can be generalized to
multi-connected domains. But the proof should be slightly modified
since formulas \eqref{4.1}--\eqref{4.2} are replaced with more
complicated formulas when $\Gamma$ has several connection
components, compare with Proposition 6.1 of \cite{Shar}.

In the case when $\Gamma={\mathbb S}$ and $\gamma$ is the identity map, the Hilbert transform ${\mathcal H}_{\mathbb S}$ is the integral
operator \cite[formula (5.4)]{JS0}: for $u\in C^\infty({\mathbb S})$,
\begin{equation}
({\mathcal H}_{\mathbb S}u)(e^{i\theta})=\frac{1}{2\pi i}\int\limits_0^{2\pi}\cot\frac{t-\theta}{2}\,u(e^{it})\,dt,
                                \label{5.10}
\end{equation}
where the integral is understood in the sense of the principle value at $t=\theta$. We are going to generalize this formula to the general case.

Let $\Omega\subset{\mathbb C}$ be a simply connected, probably multi-sheeted, domain bounded by a smooth closed curve $\Gamma$ of length $2\pi$.
As before, we choose a diffeomorphism $\gamma:{\mathbb S}\to\Gamma$ preserving arc-length and orientation.

Recall that ${\D}=\{z\in{\mathbb C}\mid|z|\le1\}$ is the unit disk. By the Riemann theorem, there exists a biholomorphism
\begin{equation}
\Phi:{\D}\to\Omega.
                                \label{5.11}
\end{equation}
The function $\Phi:{\D}\to{\mathbb C}$ belongs to $C^\infty(\D)$, is holomorphic in ${\D}\setminus{\mathbb S}$, and the complex
derivative $\Phi'(z)$ does not vanish in $\D$.
The restriction $\varphi:{\mathbb S}\to\Gamma$ of $\Phi$ to the unit circle is a diffeomorphism preserving orientation.
The biholomorphism is not unique: $\widetilde\Phi=\Phi\circ\Psi$ is also such a biholomorphism for any conformal transformation
$\Psi$ of the unit disk.
To eliminate the non-uniqueness, we fix some $s_0\in(0,\pi)$ and assume that
\begin{equation}
\varphi(1)=1,\quad\varphi(e^{is_0})=\gamma(e^{is_0}),\quad\varphi(e^{-is_0})=\gamma(e^{-is_0}).
                                \label{5.12}
\end{equation}
By the well known {\it boundary correspondence principle} for conformal maps, there exists a unique biholomorphism
$\Phi:{\D}\to\Omega$ satisfying \eqref{5.12}.

We define the diffeomorphism
$\psi=\varphi^{-1}\circ\gamma:{\mathbb S}\to{\mathbb S}$, see Figure \ref{Fig12}.

\begin{figure}[h]
\center{\includegraphics[scale=0.65,keepaspectratio]{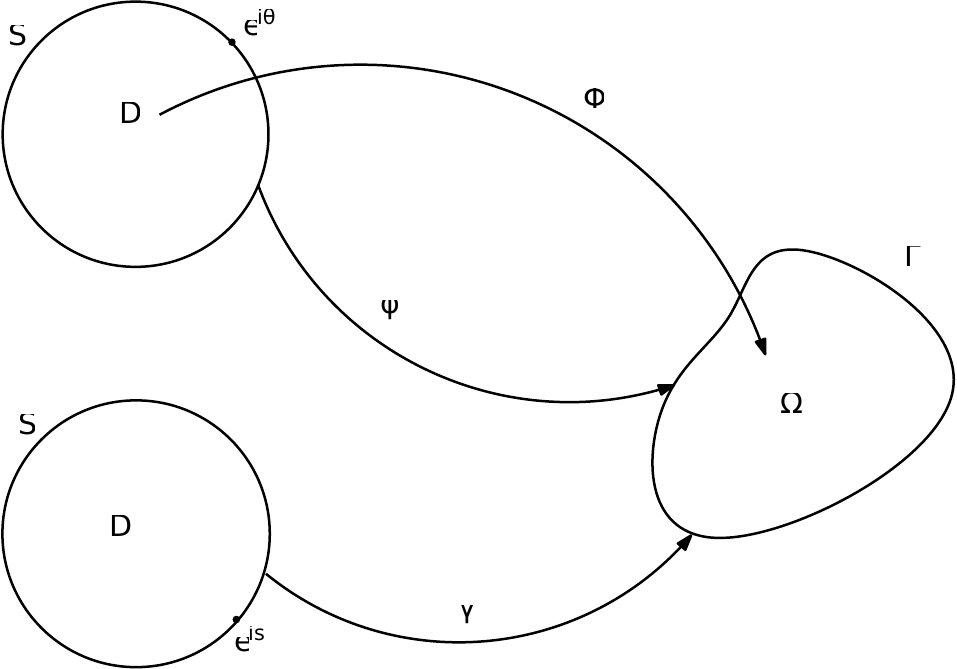}}
\caption{$\psi=\varphi^{-1}\circ\gamma:{\mathbb S}\to{\mathbb S}$.}
    \label{Fig12}
\end{figure}

The diffeomorphism
$\varphi:{\mathbb S}=\{e^{i\theta}\}\to\Gamma$ differs of
$\gamma$ by the change of parameters,
$$
\gamma(e^{is})=\varphi\big(e^{i\Theta(s)}\big),\quad\varphi(e^{i\theta})=\gamma\big(e^{iS(\theta)}\big),
$$
where
$$
S(\theta)=\int\limits_0^\theta|\Phi'(e^{it})|\,dt.
$$
The real function $S(\theta)$ belongs to $C^\infty({\mathbb R})$, satisfies $S'(\theta)>0$ and
$$
S(\theta+2k\pi)=S(\theta)+2k\pi\quad(k\in\Z).
$$
The inverse function $\theta=\Theta(s)$ of $S(\theta)$ has the same properties
\begin{equation}
\Theta'(s)>0,\quad\Theta(s+2k\pi)=\Theta(s)+2k\pi\quad(k\in\Z).
                                \label{5.13}
\end{equation}
Conditions \eqref{5.12} are written in terms of $\Theta$ as follows
\begin{equation}
\Theta(0)=0,\quad\Theta(s_0)=s_0,\quad\Theta(-s_0)=-s_0.
                                \label{5.14}
\end{equation}
The function $\gamma$ is expressed through $\varphi$ and $\Theta(s)$ by
$
\gamma(e^{is})=\varphi(e^{i\Theta(s)})
$
and the diffeomorphism $\psi$ is related to these functions by
\begin{equation}
\psi(e^{is})=e^{i\Theta(s)},\quad \psi^{-1}(e^{i\theta})=e^{iS(\theta)}.
                                \label{5.15}
\end{equation}

Theorem \ref{Th5.1} has the important corollary.

\begin{proposition} \label{P5.2}
Let hypotheses of Theorem \ref{Th5.1} be satisfied and the diffeomorphism $\psi:{\mathbb S}\to{\mathbb S}$ be defined as above.
For every function $u\in C^\infty({\mathbb S})$, there exists a constant $C=C(u)$ such that

\begin{equation}
{\mathcal H}_\gamma u=(\psi^*{\mathcal H}_{\mathbb S}\psi^{*-1})u+C.
                                \label{5.16}
\end{equation}
\end{proposition}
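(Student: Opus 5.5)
We will prove \eqref{5.16} by showing that the two operators ${\mathcal H}_\gamma$ and $\psi^*{\mathcal H}_{\mathbb S}\psi^{*-1}$ have the same $\pm1$ eigenspace decomposition of $C_0^\infty({\mathbb S})$, in the sense of Theorem \ref{Th5.1}(3). Both kill constants, so the difference can only be a constant; the constant comes from the fact that $\psi^*$ does not preserve zero mean value.

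\textbf{Step 1: matching the spaces of holomorphic traces.} Since $\Phi:\D\to\Omega$ is a biholomorphism smooth up to the boundary, $w\mapsto w\circ\Phi$ is a bijection ${\mathcal A}(\Omega)\to{\mathcal A}(\D)$. Restricting to the boundary and using $\gamma=\varphi\circ\psi$, we get $\gamma^*=\psi^*\varphi^*$. Hence
$$
{\mathcal A}(\gamma)=\gamma^*{\mathcal A}(\Gamma)=\psi^*\varphi^*{\mathcal A}(\Gamma)=\psi^*{\mathcal A}({\mathbb S}),
$$
where ${\mathcal A}({\mathbb S})$ is the space of boundary traces of functions in ${\mathcal A}(\D)$. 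Taking conjugates gives the same identity for $\overline{{\mathcal A}(\gamma)}$.

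\textbf{Step 2: the decomposition.} By Theorem \ref{Th5.1} applied to the unit circle, every $v\in C^\infty({\mathbb S})$ can be written as $v=c+a+\bar b$ with $c$ constant and $a,b\in{\mathcal A}_0({\mathbb S})$, and ${\mathcal H}_{\mathbb S}v=a-\bar b$.

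Given $u\in C^\infty({\mathbb S})$, set $v=\psi^{*-1}u$ and decompose $v$ this way. Then
$$
u=c+\psi^*a+\psi^*\bar b,\qquad \psi^*{\mathcal H}_{\mathbb S}\psi^{*-1}u=\psi^*a-\psi^*\bar b .
$$
By Step 1, $\psi^*a\in{\mathcal A}(\gamma)$, but it need not have zero mean with respect to $ds$. So write $\psi^*a=\alpha+a_0$ with $\alpha$ constant and $a_0\in{\mathcal A}_0(\gamma)$; this is legitimate because constants lie in ${\mathcal A}(\gamma)$. Similarly write $\psi^*\bar b=\beta+\bar b_0$ with $b_0\in{\mathcal A}_0(\gamma)$.

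Theorem \ref{Th5.1}(1),(3) then give
$$
{\mathcal H}_\gamma u=a_0-\bar b_0=\psi^*a-\psi^*\bar b-\alpha+\beta .
$$
This is \eqref{5.16} with $C=\beta-\alpha$.

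\textbf{The main obstacle} is Step 1, in particular the boundary regularity. We must justify that composition with $\Phi$ and $\Phi^{-1}$ preserves smoothness up to the boundary, so that ${\mathcal A}(\Omega)\circ\Phi={\mathcal A}(\D)$ exactly. This is the Kellogg--Warschawski regularity of $\Phi$, which the paper already assumes when it states $\Phi\in C^\infty(\D)$ with $\Phi'\neq0$. For multi-sheeted $\Omega$ one should check that the identification of ${\mathcal A}(\Omega)$ still goes through $\Phi$. I would simply invoke the existence of the biholomorphism \eqref{5.11} as stated, since the Riemann theorem is taken in that generality.

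The remaining bookkeeping with mean values is routine. The only subtle point is that $\psi^*$ maps $C_0^\infty({\mathbb S})$ into $C_0^\infty({\mathbb S})$ only up to constants, and this is exactly what produces $C$.
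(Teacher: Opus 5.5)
Your proof is correct and follows essentially the same route as the paper. You decompose into a constant plus zero-mean holomorphic and antiholomorphic parts on ${\mathbb S}$, transport them by $\psi^*$ using $\psi^*{\mathcal A}({\mathbb S})={\mathcal A}(\gamma)$, split off the new mean values, and apply the eigenspace description of Theorem~\ref{Th5.1}(3). The only differences are cosmetic: you treat both parts at once, whereas the paper handles $u\in{\mathcal A}({\mathbb S})$ and $u\in\overline{{\mathcal A}({\mathbb S})}$ separately and adds, and you justify $\psi^*{\mathcal A}({\mathbb S})={\mathcal A}(\gamma)$ via $\gamma^*=\psi^*\varphi^*$, which the paper simply asserts.
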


\begin{proof}
Rewrite \eqref{5.16} in the form
\begin{equation}
({\mathcal H}_\gamma\psi^*)u=(\psi^*{\mathcal H}_{\mathbb S})u+C(u).
                                \label{5.17}
\end{equation}
We first prove the validity of \eqref{5.17} for $u\in {\mathcal A}({\mathbb S})$.
Any such function can be represented in the form
$$
u=\widetilde u+C_1,\quad \widetilde u\in{\mathcal A}_0({\mathbb S}),\quad C_1=\mbox{const}.
$$
From this
\begin{equation}
{\mathcal H}_{\mathbb S}u={\mathcal H}_{\mathbb S}\widetilde u
                                \label{5.18}
\end{equation}
and
$
\psi^* u=\psi^*\widetilde u +C_1.
$
Applying the operator ${\mathcal H}_\gamma$ to the latter formula, we have
\begin{equation}
{\mathcal H}_\gamma\psi^* u={\mathcal H}_\gamma\psi^*\widetilde u.
                                \label{5.19}
\end{equation}

By Theorem \ref{Th5.1}, ${\mathcal H}_{\mathbb S}\widetilde u=\widetilde u$. Together with \eqref{5.18}, this gives
$
{\mathcal H}_{\mathbb S}u=\widetilde u.
$
Applying the operator $\psi^*$ to this equality, we have
\begin{equation}
\psi^*{\mathcal H}_{\mathbb S}u=\psi^*\widetilde u.
                                \label{5.20}
\end{equation}

The isomorphism $\psi^*:C^\infty({\mathbb S})\to C^\infty({\mathbb S})$ maps ${\mathcal A}({\mathbb S})$
onto ${\mathcal A}(\gamma)$. Therefore the function $\psi^*\widetilde u\in{\mathcal A}(\gamma)$ can be represented in the form
\begin{equation}
\psi^*\widetilde u=v+C,\quad v\in{\mathcal A}_0(\gamma),\quad C=\mbox{const}.
                                \label{5.21}
\end{equation}
From this
\begin{equation}
{\mathcal H}_\gamma \psi^*\widetilde u={\mathcal H}_\gamma v.
                                \label{5.22}
\end{equation}
Formulas \eqref{5.20} and \eqref{5.21} imply
\begin{equation}
\psi^*{\mathcal H}_{\mathbb S}u=v+C.
                                \label{5.23}
\end{equation}
By Theorem \ref{Th5.1}, ${\mathcal H}_\gamma v=v$. Therefore the formula \eqref{5.22} can be written as
$
{\mathcal H}_\gamma \psi^*\widetilde u=v.
$
This gives together with \eqref{5.19}
\begin{equation}
{\mathcal H}_\gamma \psi^* u=v.
                                \label{5.24}
\end{equation}

Comparing \eqref{5.23} and \eqref{5.24}, we arrive at \eqref{5.17}. We have thus proved the validity of \eqref{5.17} for
$u\in {\mathcal A}({\mathbb S})$.

Quite similarly we prove the validity of \eqref{5.17} for $u\in \overline{{\mathcal A}({\mathbb S})}$.

Being valid for $u\in {\mathcal A}({\mathbb S})$ and for $u\in \overline{{\mathcal A}({\mathbb S})}$, the equality \eqref{5.17}
holds for arbitrary $u\in C^\infty({\mathbb S})$ since
$C^\infty({\mathbb S})={\mathcal A}({\mathbb S})+\overline{{\mathcal A}({\mathbb S})}$ by Theorem \ref{Th5.1}.
\end{proof}

\begin{theorem} \label{Th5.3}
Let hypotheses of Theorem \ref{Th5.1} be satisfied and the function $\Theta(s)$ be defined as above.
For every function $u\in C^\infty({\mathbb S})$, there exists a constant $C=C(u)$ such that
\begin{equation}
({\mathcal H}_\gamma u)(e^{is})
=\frac{1}{2\pi i}\int\limits_0^{2\pi}\Theta'(\sigma)\,\cot\frac{\Theta(\sigma)-\Theta(s)}{2}\,u(e^{i\sigma})\,d\sigma+C.
                                \label{5.25}
\end{equation}
\end{theorem}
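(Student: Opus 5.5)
The plan is to combine Proposition \ref{P5.2} with the classical integral formula \eqref{5.10} for ${\mathcal H}_{\mathbb S}$, and then make a change of variables. By Proposition \ref{P5.2}, for $u\in C^\infty({\mathbb S})$ we have ${\mathcal H}_\gamma u=\psi^*{\mathcal H}_{\mathbb S}(\psi^{*-1}u)+C$. So it suffices to compute $\psi^*{\mathcal H}_{\mathbb S}w$ explicitly, where $w=\psi^{*-1}u=u\circ\psi^{-1}$.

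By \eqref{5.15}, $\psi^{-1}(e^{it})=e^{iS(t)}$, so $w(e^{it})=u(e^{iS(t)})$. Evaluating $\psi^*{\mathcal H}_{\mathbb S}w$ at $e^{is}$ means evaluating ${\mathcal H}_{\mathbb S}w$ at $\psi(e^{is})=e^{i\Theta(s)}$. Formula \eqref{5.10} with $\theta=\Theta(s)$ then gives
$$
(\psi^*{\mathcal H}_{\mathbb S}w)(e^{is})=\frac{1}{2\pi i}\int\limits_0^{2\pi}\cot\frac{t-\Theta(s)}{2}\,u(e^{iS(t)})\,dt .
$$

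Next we substitute $t=\Theta(\sigma)$, so that $dt=\Theta'(\sigma)\,d\sigma$ and $S(\Theta(\sigma))=\sigma$. The integration range $t\in[0,2\pi]$ corresponds to $\sigma\in[S(0),S(0)+2\pi]=[0,2\pi]$, because $\Theta(0)=0$ by \eqref{5.14} and $\Theta(\sigma+2\pi)=\Theta(\sigma)+2\pi$ by \eqref{5.13}. In any case the integrand is $2\pi$-periodic in $\sigma$, so the endpoints do not matter. The substitution turns the integral into exactly the right-hand side of \eqref{5.25} without the constant, and adding the constant from Proposition \ref{P5.2} finishes the proof.

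The only delicate point is the principal value. The singularity at $t=\Theta(s)$ corresponds to $\sigma=s$. A symmetric excision $|t-\Theta(s)|>\varepsilon$ becomes a slightly asymmetric excision in $\sigma$, namely $\Theta(\sigma)\notin(\Theta(s)-\varepsilon,\Theta(s)+\varepsilon)$. I will show that the two principal values coincide in the limit.

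Near $\sigma=s$, write
$$
\Theta'(\sigma)\cot\frac{\Theta(\sigma)-\Theta(s)}{2}=\frac{2}{\sigma-s}+O(1).
$$
This follows from $\Theta(\sigma)-\Theta(s)=\Theta'(s)(\sigma-s)+O((\sigma-s)^2)$, since $\Theta'(\sigma)/\bigl(\Theta'(s)(\sigma-s)\bigr)$ equals $1/(\sigma-s)+O(1)$. The excised interval in $\sigma$ has endpoints $s-\varepsilon/\Theta'(s)+O(\varepsilon^2)$ and $s+\varepsilon/\Theta'(s)+O(\varepsilon^2)$. Integrating $2u(e^{i\sigma})/(\sigma-s)$ over the difference between this interval and the symmetric one contributes $O(\varepsilon)$, because the log ratio of the endpoint distances is $O(\varepsilon)$. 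Hence the principal value is preserved, and \eqref{5.25} holds in the principal value sense.
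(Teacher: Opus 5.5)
Your proposal is correct and follows the paper's proof: combine Proposition \ref{P5.2} with \eqref{5.10}, evaluate at $\theta=\Theta(s)$ using $\psi^{-1}(e^{it})=e^{iS(t)}$, and substitute $t=\Theta(\sigma)$. You also check that a symmetric excision in $t$ becomes an excision in $\sigma$ that is asymmetric only to order $\varepsilon^2$, so the principal value is preserved; the paper leaves this point implicit, and your argument for it is sound.
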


\begin{proof}
Replacing $u$ by $\psi^{*-1}u$ in the formula \eqref{5.10}, we obtain
$$
({\mathcal H}_{\mathbb S}\psi^{*-1}u)(e^{i\theta})
=\frac{1}{2\pi i}\int\limits_0^{2\pi}\cot\frac{t-\theta}{2}\,u(\psi^{-1}(e^{it}))\,dt.
$$
By \eqref{5.15}, $\psi^{-1}(e^{it})=e^{iS(t)}$. The latter formula becomes
$$
({\mathcal H}_{\mathbb S}\psi^{*-1}u)(e^{i\theta})
=\frac{1}{2\pi i}\int\limits_0^{2\pi}\cot\frac{t-\theta}{2}\,u(e^{iS(t)})\,dt.
$$
Setting $\theta=\Theta(s)$ here, we obtain
$$
(\psi^*{\mathcal H}_{\mathbb S}\psi^{*-1}u)(e^{is})
=\frac{1}{2\pi i}\int\limits_0^{2\pi}\cot\frac{t-\Theta(s)}{2}\,u(e^{iS(t)})\,dt.
$$
Together with \eqref{5.16}, this gives
$$
({\mathcal H}_\gamma u)(e^{is})=\frac{1}{2\pi i}\int\limits_0^{2\pi}\cot\frac{t-\Theta(s)}{2}\,u(e^{iS(t)})\,dt+C.
$$
Changing the integration variable here by $t=\Theta(\sigma)$, we arrive at \eqref{5.25}.
\end{proof}

It makes sense to mention that $C(u)=0$ in the case when $\Theta(s)=s$ since \eqref{5.25} coincides with \eqref{5.10} in this case.

The constant $C(u)$ can be  eliminated from equation \eqref{5.25}. To this end, we fix some $s_1\in(0,2\pi)$ and set
$s=s_1$ in \eqref{5.25}
$$
({\mathcal H}_\gamma u)(s_1)
=\frac{1}{2\pi i}\int\limits_0^{2\pi}\Theta'(\sigma)\,\cot\frac{\Theta(\sigma)-\Theta(s_0)}{2}\,u(\sigma)\,d\sigma+C(u).
$$
Taking the difference of \eqref{5.25} and the latter equality, we arrive at the equation
\begin{equation}
({\mathcal H}_\gamma u)(s)-({\mathcal H}_\gamma u)(s_1)=
\frac{1}{2\pi i}\int\limits_0^{2\pi}\Theta'(\sigma)
\Big(\cot\frac{\Theta(\sigma)-\Theta(s)}{2}-\cot\frac{\Theta(\sigma)-\Theta(s_1)}{2}\Big)u(\sigma)\,d\sigma.
                                \label{5.26}
\end{equation}
This equation does not contain $C(u)$ but its structure is more complicated than that of \eqref{5.25}.

\section{Numerical solution of the Calder\'{o}n problem} \label{section num}

We shall exploit formula \eqref{5.26} to obtain a reconstruction of the boundary of the domain $\Omega$. For a function
$u\in C^\infty({\mathbb S})$, we write $u(s)$ instead of $u(e^{is})$. Then the formula \eqref{5.26} is written as
\begin{equation}
({\mathcal H}_\gamma u)(\sigma)-({\mathcal H}_\gamma u)(\sigma_1)
=\frac{1}{2\pi i}\int\limits_{-\pi}^{\pi}\Theta'(s)\Big(\cot\frac{\Theta(s)-\Theta(\sigma)}{2}
-\cot\frac{\Theta(s)-\Theta(\sigma_1)}{2}\Big)u(s)\,ds.
                                \label{7.1}
\end{equation}
For $\sigma\neq\sigma_1$, the integral on the right-hand side of \eqref{7.1} is understood in the sense of the principle value at $s=\sigma$
and $s=\sigma_1$.
For convenience, we also reproduce conditions \eqref{5.13}--\eqref{5.14} for $s_0=3\pi/2$
\begin{equation}
\Theta(0)=0,\quad\Theta'(s)>0,\quad\Theta(\pm \frac{3\pi}{2})=\pm\frac{3\pi}{2},\quad\Theta(s+2k\pi)=\Theta(s)+2k\pi\quad(k\in\Z).
                                \label{7.2}
\end{equation}

The Hilbert transform ${\mathcal H}_\gamma$ is given in the inverse problem, i.e., we know the left-hand side of \eqref{7.1} for every function
$u\in C^\infty({\mathbb S})$ and for all $\sigma,\sigma_1\in{\R}$. Our goal is determining the real function $\Theta\in C^\infty({\mathbb R})$
satisfying \eqref{7.2}.
The problem is  overdetermined: We are looking for one function $\Theta(s)$ given the data
$({\mathcal H}_\gamma u)(s)$ for any function
$u\in C^\infty({\mathbb S})$.

\subsection{Differential equations for $\Theta(s)$}
We write \eqref{7.1} as the integral equation
\begin{equation}
2\pi i\Big(({\mathcal H}_\gamma u)(\sigma)-({\mathcal H}_\gamma u)(\sigma_1)\Big)
=\int\limits_{-\pi}^{\pi}K_{\sigma_1}(\sigma,s)\,u(s)\,ds
                                \label{7.3}
\end{equation}
with the kernel
\begin{equation}
K_{\sigma_1}(\sigma,s)=\Theta'(s)\Big(\cot\frac{\Theta(s)-\Theta(\sigma)}{2}
-\cot\frac{\Theta(s)-\Theta(\sigma_1)}{2}\Big).
                                \label{7.4}
\end{equation}

Knowledge of an integral operator is equivalent to knowledge of its kernel. Therefore we can assume the kernel $K_{\sigma_1}(\sigma,s)$ to be
known. We defer the discuss of the numerical procedure of recovering the kernel from the left-hand side of \eqref{7.3} to the next subsection.

Assuming the kernel $K_{\sigma_1}(\sigma,s)$ to be known, we write \eqref{7.4} as an equation in the unknown function $\Theta$
\begin{equation}
\Big(\cot\frac{\Theta(s)-\Theta(\sigma)}{2}
-\cot\frac{\Theta(s)-\Theta(\sigma_1)}{2}\Big)\Theta'(s)=K_{\sigma_1}(\sigma,s).
                                \label{7.5}
\end{equation}
Strictly speaking, \eqref{7.5} is not a differential equation since, besides $\Theta(s)$, it involves $\Theta(\sigma)$ and $\Theta(\sigma_1)$.
Nevertheless, some differential equations can be derived from \eqref{7.5}. Indeed, setting $\sigma=0$ and $\sigma_1=\pm 2\pi/3$ in \eqref{7.5} and
using \eqref{7.2}, we arrive at two differential equations
\begin{equation}
\Big(\cot\frac{\Theta(s)}{2}
-\cot\frac{\Theta(s)-2\pi/3}{2}\Big)\Theta'(s)=K_{2\pi/3}(0,s),
                                \label{7.6}
\end{equation}
\begin{equation}
\Big(\cot\frac{\Theta(s)}{2}
-\cot\frac{\Theta(s)+2\pi/3}{2}\Big)\Theta'(s)=K_{-2\pi/3}(0,s).
                                \label{7.7}
\end{equation}
These equations are of the same structure but they are {\it different}.
One more similar equation is obtained by setting $\sigma=2\pi/3$ and $\sigma_1=-2\pi/3$ in \eqref{7.5}
\begin{equation}
\Big(\cot\frac{\Theta(s)-2\pi/3}{2}-\cot\frac{\Theta(s)+2\pi/3}{2}\Big)\Theta'(s)=K_{-2\pi/3}(2\pi/3,s).
                                \label{7.8}
\end{equation}
The latter equation is just the difference of \eqref{7.6} and \eqref{7.7}. Nevertheless, the equation \eqref{7.8} will be also used.

In what follows, we are going to restrict ourselves by considering equations \eqref{7.6}--\eqref{7.8} only. Before doing the restriction,
we emphasize that, while passing from \eqref{7.5} to \eqref{7.6}--\eqref{7.8}, a lot of information is lost. On the other hand, the passage
simplifies the first part of our calculations: we do not need to compute the function $K_{\sigma_1}(\sigma,s)$ of three variables, we need
to compute three functions $K_{2\pi/3}(0,s)$, $K_{-2\pi/3}(0,s)$ and $K_{-2\pi/3}(2\pi/3,s)$ of one variable.

Each of differential equations \eqref{7.6}--\eqref{7.8} should be accompanied by some initial condition. The coefficient
$$
\cot\frac{\Theta(s)-2\pi/3}{2}-\cot\frac{\Theta(s)+2\pi/3}{2}
$$
of the equation \eqref{7.8} has singularities at $s=-2\pi/3$ and $s=2\pi/3$, but it is a smooth non-vanishing function for
$s\in(-2\pi/3,2\pi/3)$. Therefore we consider the equation \eqref{7.8} on the interval $(-2\pi/3,2\pi/3)$ and choose the initial condition
$\Theta(0)=0$ that is a part of \eqref{7.2}. Thus, for the equation \eqref{7.8}, we have to solve the Cauchy problem
\begin{equation}
\Big(\cot\frac{\Theta(s)-2\pi/3}{2}-\cot\frac{\Theta(s)+2\pi/3}{2}\Big)\Theta'(s)=K^0(s)\ \mbox{for}\ s\in(-2\pi/3,2\pi/3),\quad \Theta(0)=0
                                \label{7.9}
\end{equation}
with the right-hand side
$$
K^0(s)=K_{-2\pi/3}(2\pi/3,s).
$$
Quite similarly, for equations \eqref{7.7} and \eqref{7.6}, we are going to solve Cauchy problems
\begin{equation}
\Big(\cot\frac{\Theta(s)}{2}
-\cot\frac{\Theta(s)+2\pi/3}{2}\Big)\Theta'(s)=K^+(s)\ \mbox{for}\ s\in(0,4\pi/3),\quad \Theta(2\pi/3)=2\pi/3
                                \label{7.10}
\end{equation}
and
\begin{equation}
\Big(\cot\frac{\Theta(s)}{2}
-\cot\frac{\Theta(s)-2\pi/3}{2}\Big)\Theta'(s)=K^-(s)\ \mbox{for}\ s\in(-4\pi/3,0),\quad \Theta(-2\pi/3)=-2\pi/3,
                                \label{7.10a}
\end{equation}
where the right-hand sides are defined by
\begin{equation}
K^+(s)=K_{-2\pi/3}(0,s),\quad K^-(s)=K_{2\pi/3}(0,s).
                                \label{7.11}
\end{equation}
Observe that
\begin{equation}
K^0(s)=K^+(s)-K^-(s).
                                \label{7.11a}
\end{equation}

\subsection{Numerical recovery of  the kernels}
According to \eqref{7.11}--\eqref{7.11a}, we need to compute $K^\pm(s)=K_{\mp2\pi/3}(0,s)$ only.
Setting $\sigma=0$ and $\sigma_1=\mp 2\pi/3$ in \eqref{7.3}, we have
$$
2\pi i\Big(({\mathcal H}_\gamma u)(0)-({\mathcal H}_\gamma u)(\mp 2\pi/3)\Big)
=\int\limits_{-\pi}^{\pi}K^\pm(s)\,u(s)\,ds.
$$
This equality holds for any function $u\in C^\infty({\mathbb S})$. In particular, setting $u(s)=e^{-ins}$, we obtain
\begin{equation}
i\Big(({\mathcal H}_\gamma e^{-ins})(0)-({\mathcal H}_\gamma e^{-ins})(\mp 2\pi/3)\Big)
=\frac{1}{2\pi}\int\limits_{-\pi}^{\pi}K^\pm(s)\,e^{-ins}\,ds.
                                \label{7.13}
\end{equation}

Further arguments of the current subsection are somewhat formal ones. We represent the function $K^\pm$ by its Fourier series
\begin{equation}
K^\pm(s)=\sum\limits_{n\in\Z}\widehat{K^\pm_n}\,e^{ins}.
                                \label{7.14}
\end{equation}
The Fourier coefficients are expressed by
\begin{equation}
\widehat{K^\pm_n}=\frac{1}{2\pi}\int\limits_{-\pi}^{\pi}K^\pm(s)\,e^{-ins}\,ds.
                                \label{7.15}
\end{equation}
Comparing \eqref{7.13} and \eqref{7.15}, we see that
\begin{equation}
\widehat{K^\pm_n}=i\Big(({\mathcal H}_\gamma e^{-ins})(0)-({\mathcal H}_\gamma e^{-ins})(\mp 2\pi/3)\Big).
                                \label{7.16}
\end{equation}
Substitute this expression into \eqref{7.14} to obtain
\begin{equation}
K^\pm(s)=i\sum\limits_{n\in\Z}\Big(({\mathcal H}_\gamma e^{-ins})(0)-({\mathcal H}_\gamma e^{-ins})(\mp 2\pi/3)\Big)e^{ins}.
                                \label{7.17}
\end{equation}

Let us represent the Hilbert transform $\mathcal{H}_{\gamma}$ in the trigonometric basis:
\begin{equation}
{\mathcal H}_\gamma\,e^{ims}=\sum\limits_{n\in\Z}h_{mn}\,e^{ins}\quad(m\in\Z).
                                \label{7.18}
\end{equation}
According to this formula,
\begin{equation}
({\mathcal H}_\gamma e^{-ins})(0)-({\mathcal H}_\gamma e^{-ins})(\mp 2\pi/3)
=\sum\limits_{m\in\Z}h_{-n,m}(1-e^{\mp 2im\pi/3}).
                                \label{7.19}
\end{equation}
We know that $h_{-n,m}$ fast decays as $|m|\to\infty$. Therefore the series on the right-hand side of \eqref{7.19} converges absolutely.
Substituting the expression \eqref{7.19} into \eqref{7.17}, we obtain
\begin{equation}
K^\pm(s)=i\sum\limits_{n\in\Z}\Big(\sum\limits_{m\in\Z}h_{-n,m}(1-e^{\mp 2im\pi/3})\Big)e^{ins}.
                                \label{7.20}
\end{equation}

How about the convergence of the series \eqref{7.20}? Let us consider the simplest example when $\Omega$ coincides with the unit disk
and $\gamma:{\mathbb S}\to{\mathbb S}$ is the identity. As we know, in this case ${\mathcal H}_\gamma e^{ins}=(\mbox{sgn}\,n)\,e^{ins}$, where
$$
\mbox{sgn}\,0=0,\quad\mbox{sgn}\,n=1\ \mbox{for}\ n>0,\quad\mbox{sgn}\,n=-1\ \mbox{for}\ n<0.
$$
Therefore $({\mathcal H}_\gamma e^{-ins})(0)=-\mbox{sgn}\,n$ and $({\mathcal H}_\gamma e^{-ins})(-2\pi/3)=-(\mbox{sgn}\,n)\,e^{2in\pi/3}$.
Substituting these values into \eqref{7.16}, we obtain
$$
\widehat{K^+_n}=-i\,(\mbox{sgn}\,n)(1-e^{2in\pi/3}).
$$
The Fourier coefficients of the kernel $K^+$ are uniformly bounded
$
|\widehat{K^+_n}|\le2
$
but do not decay.

The matrix $(h_{mn})$ of the Hilbert transform is the diagonal matrix in the ``simplest case'': $h_{mn}=(\mbox{sgn}\,m)\delta_{mn}$,
where $\delta_{mn}$ is the Kronecker symbol. The formula \eqref{7.19} becomes
$$
({\mathcal H}_\gamma e^{-ins})(0)-({\mathcal H}_\gamma e^{-ins})(-3\pi/2)=-(\mbox{sgn}\,n)(1-e^{2in\pi/3})
$$
and the formula \eqref{7.20} becomes
$$
K^+(s)=-i\sum\limits_{n\in\Z}(\mbox{sgn}\,n)(1-e^{2in\pi/3})\,e^{ins}.
$$
The series on the right-hand side of the latter formula does not converge. Of course the same is true in the general case: the formula
\eqref{7.20} does not make any sense since the series on the right-hand side does not converge.

The situation can be easily explained. Indeed, we are trying to recover the unknown function $K^+(s)$ by computing its Fourier coefficients.
But the function does not belong to $L^1({\mathbb S})$ since it has two singularities at $s=0$ and $s=4\pi/3$ as is seen from \eqref{7.6}. Therefore the Fourier series \eqref{7.14} does not need to converge, even at the regular points $s\notin\{0,4\pi/3\}$.

Nevertheless, the situation can be easily improved. The idea of the improvement is as follows. We smooth
the function $K^+(s)$ by multiplying it by an appropriately chosen bump function
$\mu^+(s)\in C^\infty({\mathbb S})$ vanishing near singular points of $K^+(s)$. In other words, we are going
to recover {\it the smooth kernel} $L^+(s)=\mu^+(s)K^+(s)$ instead of $K^+(s)$. The function $K^+(s)$ is unknown, but we know that it has
singularities at two points $s=0$ and $s=4\pi/3$. We choose the bump function so that $\mu^+(s)=0$ for $|s|\le\varepsilon/2$ and $\mu^+(s)=0$
for $|s-4\pi/3|\le\varepsilon/2$ for a small $\varepsilon>0$. In virtue of the choice, the product $L^+=\mu^+K^+$ belongs to
$C^\infty({\mathbb S})$ and the Fourier series of $L^+$ converges well. On the other hand, we want $L^+$ to coincide with $K^+$
outside of the singularities. This means that the bump function must satisfy
$$
\mu^+(s)=1\quad\mbox{if}\quad|s|\ge\varepsilon\quad\mbox{and}\quad |s-4\pi/3|\ge\varepsilon.
$$
In particular $L^+(s)=K^+(s)$ for $s\in(\varepsilon,4\pi/3-\varepsilon)$. This is enough for solving the Cauchy problem \eqref{7.10} on the
interval $(\varepsilon,4\pi/3-\varepsilon)$.

The idea will be realized in subsection \ref{subsect C1}. But first we will recall some basic facts of distribution theory.

\subsection{Distributions and Sobolev spaces on the circle}
Let ${\mathcal D}'({\mathbb S})$ be the space of distributions on the unit circle. For a distribution $f\in{\mathcal D}'({\mathbb S})$ and
test function $\varphi\in C^\infty({\mathbb S})$, let $\langle f|\varphi\rangle$ be the value of $f$ on $\varphi$. The embedding
$L^1({\mathbb S})\subset{\mathcal D}'({\mathbb S})$ is defined by
$$
\langle f|\varphi\rangle=\int\limits_0^{2\pi}f(s)\varphi(s)\,ds\quad\big(f\in L^1({\mathbb S}),\varphi\in C^\infty({\mathbb S})\big).
$$
For $f\in{\mathcal D}'({\mathbb S})$ and $\varphi\in C^\infty({\mathbb S})$, the product $\varphi f\in{\mathcal D}'({\mathbb S})$ is defined
by $$
\langle\varphi f|\psi\rangle=\langle f|\varphi\psi\rangle\quad\big(\psi\in C^\infty({\mathbb S})\big).
$$
For a distribution $f\in{\mathcal D}'({\mathbb S})$, Fourier coefficients are defined by
$\widehat f_n=\frac{1}{2\pi}\langle f|e^{-ins}\rangle$.

For a real $r$, the Sobolev space $H^r({\mathbb S})$ consists of distributions $f\in{\mathcal D}'({\mathbb S})$ satisfying
$$
\|f\|^2_{H^r({\mathbb S})}=\sum\limits_{n\in\Z}(1+|n|^{2r})|\widehat f_n|^2<\infty.
$$
Being furnished with this norm, $H^r({\mathbb S})$ is a Hilbert space. One of main statements of distribution theory says that
$$
{\mathcal D}'({\mathbb S})=\bigcup\limits_{r\in\R}H^r({\mathbb S}).
$$
This equality holds for any compact manifold (with a more general definition of Sobolev spaces).
In particular, the equality implies: for $\varphi\in C^\infty({\mathbb S})$ and $f\in{\mathcal D}'({\mathbb S})$, the Fourier coefficients
of the product are expressed by
$$
(\widehat{\varphi f})_n=\sum\limits_{m\in\Z}\widehat f_m\widehat \varphi_{n-m}
$$
and the series on the right-hand side converges absolutely.

\subsection{Smooth kernels} \label{subsect C1}
We introduce a $C^1$ bump function, $\mu^0\in C^1({\mathbb S})$ defined by
\begin{equation}
\mu^0(s)=\left\{\begin{array}{ll}
1&\mbox{for}\ 0\le s\le\pi/3,\\
\frac{1}{2}(1+\cos6s)&\mbox{for}\ \pi/3\le s\le\pi/2,\\
0&\mbox{for}\ \pi/2\le s\le\pi
\end{array}\right.
                                \label{7.25}
\end{equation}

and $\mu^0(-s)=\mu^0(s)$.

The Fourier coefficients of $\mu^0$ can be computed explicitly. We have
\begin{equation}
\begin{aligned}
\widehat{\mu^0_0}&=\frac{5}{12},\quad
\widehat{\mu^0_6}=\widehat{\mu^0_{-6}}=-\frac{1}{24},\\
\widehat{\mu^0_n}&=-\frac{36}{2n(n^2-36)\pi}\Big(\sin\frac{n\pi}{2}+\sin\frac{n\pi}{3}\Big)\quad\mbox{for}\quad n\notin\{0,6,-6\}.
\end{aligned}
                                \label{7.26}
\end{equation}

This implies that $\mu^0\in H^r({\mathbb S})$ for $r<5/2$. The Fourier series
$$
\mu^0(s)=\sum\limits_{n\in\Z}\widehat{\mu^0_n}\,e^{ins}
$$
converges uniformly and absolutely. In practical calculations, we will use the finite sum
$$
\mu^0(s)\approx\sum\limits_{n=-N}^N\widehat{\mu^0_n}\,e^{ins}.
$$

Next, we define two more $C^1$ bump functions $\mu^+,\mu^-\in C^1({\mathbb S})$ by shifting the function $\mu^0$
$$
\mu^+(s)=\mu^0(s-2\pi/3),\quad \mu^-(s)=\mu^0(s+2\pi/3).
$$

The Fourier coefficients of $\mu^+$ and $\mu^-$ are expressed through coefficients \eqref{7.26} by
\begin{equation}
\widehat{\mu^+_n}=e^{-2in\pi/3}\,\widehat{\mu^0_n},\quad\widehat{\mu^-_n}=e^{2in\pi/3}\,\widehat{\mu^0_n}.
                                \label{7.28}
\end{equation}

As $\mathrm{supp}(\mu^{0}) = [-\pi/2,\pi/2]$ and $K^{0}$ is $C^{\infty}$ in $[-\pi/2-\varepsilon,\pi/2+\varepsilon]$ for $\varepsilon$ small
enough, we have that $L^{0} \in H^r({\mathbb S})$ for $r < \frac{5}{2}$. Similarly $L^{\pm} \in H^r({\mathbb S})$ for $r < \frac{5}{2}$.
The smooth kernels $L^0,L^\pm\in C^1({\mathbb S})$ are defined by
\begin{equation}
L^0(s)=\mu^0(s)K^0(s),\quad L^+(s)=\mu^+(s)K^+(s),\quad L^-(s)=\mu^-(s)K^-(s).
                                \label{7.29}
\end{equation}
Their Fourier coefficients are expressed by
\begin{equation}
\widehat{L^0_n}=  \sum\limits_{m\in\Z}\widehat{K^0_m}\widehat{\mu^0_{n-m}},\quad
\widehat{L^\pm_n}=\sum\limits_{m\in\Z}\widehat{K^\pm_m}\widehat{\mu^\pm_{n-m}}
                                \label{7.30}
\end{equation}
and these series converge absolutely.
Assuming $\widehat{K^\pm_n}$ and $\widehat{K^0_n}$ have been computed by \eqref{7.16} and \eqref{7.11a}, we can compute
$\widehat{L^0_n},\widehat{L^\pm_n}$ and recover the functions $L^0,L^\pm$ by
\begin{equation}
L^0(s)=\sum\limits_{n\in\Z}\widehat{L^0_n}\,e^{ins},\quad
L^\pm(s)=\sum\limits_{n\in\Z}\widehat{L^\pm_n}\,e^{ins}.
                                \label{7.31}
\end{equation}
Unlike the divergent series \eqref{7.14}, the Fourier series \eqref{7.31} converges uniformly and absolutely as $L^{0}(s)$ and $L^{\pm}(s)$ are
in $H^r({\mathbb S})$ for $r<\frac{5}{2}$. Thus, we know the functions $L^0$ and $L^\pm$.

As is seen from \eqref{7.25} and \eqref{7.29},
$$
\begin{aligned}
K^0(s)&=L^0(s)\quad \mbox{for}\quad s\in[-\pi/3,\pi/3],\\
K^+(s)&=L^+(s)\quad \mbox{for}\quad s\in[\pi/3,\pi],\\
K^-(s)&=L^-(s)\quad \mbox{for}\quad s\in[-\pi,-\pi/3].
\end{aligned}
$$
Therefore formulas \eqref{7.9} and \eqref{7.10}--\eqref{7.10a} imply
\begin{equation}
\Big(\cot\frac{\Theta(s)-2\pi/3}{2}-\cot\frac{\Theta(s)+2\pi/3}{2}\Big)\Theta'(s)=L^0(s)\ \mbox{for}\ s\in[-\pi/3,\pi/3],\quad \Theta(0)=0;
                                \label{7.32}
\end{equation}
\begin{equation}
\Big(\cot\frac{\Theta(s)}{2}
-\cot\frac{\Theta(s)+2\pi/3}{2}\Big)\Theta'(s)=L^+(s)\quad \mbox{for}\ s\in[\pi/3,\pi],\quad \Theta(2\pi/3)=2\pi/3;
                                \label{7.33}
\end{equation}
\begin{equation}
\Big(\cot\frac{\Theta(s)}{2}
-\cot\frac{\Theta(s)-2\pi/3}{2}\Big)\Theta'(s)=L^-(s)\quad \mbox{for}\ s\in[-\pi,-\pi/3],\quad \Theta(-2\pi/3)=-2\pi/3.
                                \label{7.34}
\end{equation}
Right-hand sides of these equations belong to $C^1({\mathbb S})$ and are given by their Fourier series. We can solve Cauchy problems
\eqref{7.32}--\eqref{7.34} and find the function $\Theta(s)$ on the segments $[-\pi/3,\pi/3],[\pi/3,\pi]$ and $[-\pi,-\pi/3]$.
By virtue of $\Theta(s+2\pi)=\Theta(s)+2\pi$ (see \eqref{7.32}), we can also find the function $\Theta(s)$ on the segment $[\pi,2\pi-\pi/3]$.
Thus, the function $\Theta(s)$ is known on $[-\pi/3,2\pi-\pi/3]$. Together with $\Theta(s+2k\pi)=\Theta(s)+2k\pi\ (k\in\Z)$, this determines
$\Theta(s)$ for all real $s$.

Let us discuss these circumstances in more details.
Let us denote solutions to the Cauchy problems \eqref{7.32}--\eqref{7.34} by $\Theta^0$ and $\Theta^\pm$ respectively. Then the following
equalities must hold:
\begin{equation}
\Theta^0(\pi/3)=\Theta^+(\pi/3),\quad
\Theta^+(\pi)=\Theta^-(-\pi)+2\pi,\quad
\Theta^-(-\pi/3)=\Theta^0(-\pi/3).
                                \label{7.35}
\end{equation}
These equalities provide control for the reconstruction algorithm.

\subsection{Cauchy problems (4.33)--(4.35) can be solved analytically}
Let us discuss the Cauchy problem \eqref{7.32}. The left-hand side of the equation \eqref{7.32} can be represented as the derivative
$$
\Big(\cot\frac{\Theta(s)-2\pi/3}{2}-\cot\frac{\Theta(s)+2\pi/3}{2}\Big)\Theta'(s)
=2\frac{d}{ds}\left(\ln\frac{\Big|\sin\frac{\Theta(s)-2\pi/3}{2}\Big|}{\left|\sin\frac{\Theta(s)+2\pi/3}{2}\right|}\right).
$$
The equation \eqref{7.32} is now written as
$$
\frac{d}{ds}\left(\ln\frac{\Big|\sin\frac{\Theta(s)-2\pi/3}{2}\Big|}{\left|\sin\frac{\Theta(s)+2\pi/3}{2}\right|}\right)=
\frac{1}{2}L^0(s).
$$
Using the initial condition $\Theta(0)=0$, we obtain
\begin{equation}
\frac{\Big|\sin\frac{\Theta(s)-2\pi/3}{2}\Big|}{\left|\sin\frac{\Theta(s)+2\pi/3}{2}\right|}
=\exp\Big(\frac{1}{2}\int\limits_0^s L^0(\sigma)\,d\sigma\Big)\quad(s\in[-\pi/3,\pi/3]).
                                \label{7.36}
\end{equation}
We try to express $\Theta(s)$ from \eqref{7.36}. We first determine signs of $\sin\frac{\Theta(s)-2\pi/3}{2}$ and of
$\sin\frac{\Theta(s)+2\pi/3}{2}$.
Since $\Theta'(s)>0$  and $\Theta(-2\pi/3)=-2\pi/3, \Theta(2\pi/3)=2\pi/3$, we have
$$
-2\pi/3< \Theta(s)<2\pi/3\quad\mbox{for}\quad s\in[-\pi/3,\pi/3].
$$
From this
$$
-2\pi/3<\frac{\Theta(s)-2\pi/3}{2}<0,\quad 0<\frac{\Theta(s)+2\pi/3}{2}<2\pi/3\quad\mbox{for}\quad s\in[-\pi/3,\pi/3].
$$
Thus, $\sin\frac{\Theta(s)-2\pi/3}{2}<0$ and $\sin\frac{\Theta(s)+2\pi/3}{2}>0$ for $s\in[-\pi/3,\pi/3]$. The equation \eqref{7.36} simplifies
to the following one:
\begin{equation}
\frac{\sin\frac{\Theta(s)-2\pi/3}{2}}{\sin\frac{\Theta(s)+2\pi/3}{2}}
=-\exp\Big(\frac{1}{2}\int\limits_0^s L^0(\sigma)\,d\sigma\Big)\quad(s\in[-\pi/3,\pi/3]).
                                \label{7.37}
\end{equation}
Next, we use the identity
$$
\begin{aligned}
\sin\frac{\Theta(s)-2\pi/3}{2}&=\sin\Big(\frac{\Theta(s)+2\pi/3}{2}-\frac{2\pi}{3}\Big)\\
&=\cos\frac{2\pi}{3}\sin\frac{\Theta(s)+2\pi/3}{2}-\sin\frac{2\pi}{3}\cos\frac{\Theta(s)+2\pi/3}{2}\\
&=-\frac{1}{2}\sin\frac{\Theta(s)+2\pi/3}{2}-\frac{\sqrt{3}}{2}\cos\frac{\Theta(s)+2\pi/3}{2}.
\end{aligned}
$$
Replacing the nominator on the left-hand side of \eqref{7.37} with the latter expression, we obtain
$$
-\frac{1}{2}-\frac{\sqrt{3}}{2}\cot\frac{\Theta(s)+2\pi/3}{2}
=-\exp\Big(\frac{1}{2}\int\limits_0^s L^0(\sigma)\,d\sigma\Big).
$$
From this we express
$$
\cot\frac{\Theta(s)+2\pi/3}{2}
=\frac{1}{\sqrt{3}}\bigg[2\exp\Big(\frac{1}{2}\int\limits_0^s L^0(\sigma)\,d\sigma\Big)-1\bigg]
$$
and
\begin{equation}
\tan\frac{\Theta(s)+2\pi/3}{2}
=\sqrt{3}\bigg[2\exp\Big(\frac{1}{2}\int\limits_0^s L^0(\sigma)\,d\sigma\Big)-1\bigg]^{-1}.
                                \label{7.37a}
\end{equation}
Then we use the obvious relations
$$
\tan\frac{\Theta(s)}{2}=\tan\Big(\frac{\Theta(s)+2\pi/3}{2}-\frac{\pi}{3}\Big)
=\frac{\tan\frac{\Theta(s)+2\pi/3}{2}-\tan\frac{\pi}{3}}{1+\tan\frac{\pi}{3}\tan\frac{\Theta(s)+2\pi/3}{2}}
=\frac{\tan\frac{\Theta(s)+2\pi/3}{2}-\sqrt{3}}{1+\sqrt{3}\tan\frac{\Theta(s)+2\pi/3}{2}}.
$$
This gives together with \eqref{7.37a}
$$
\tan\frac{\Theta(s)}{2}
=\frac{\sqrt{3}\bigg[2\exp\Big(\frac{1}{2}\int\limits_0^s L^0(\sigma)\,d\sigma\Big)-1\bigg]^{-1}-\sqrt{3}}
{1+3\bigg[2\exp\Big(\frac{1}{2}\int\limits_0^s L^0(\sigma)\,d\sigma\Big)-1\bigg]^{-1}}.
$$
Multiply the nominator and denominator on the right-hand side by $2\exp\Big(\frac{1}{2}\int\limits_0^s L^0(\sigma)\,d\sigma\Big)-1$ to obtain
$$
\tan\frac{\Theta(s)}{2}
=\sqrt{3}\,\,\frac{1-\exp\Big(\frac{1}{2}\int\limits_0^s L^0(\sigma)\,d\sigma\Big)}
{1+\exp\Big(\frac{1}{2}\int\limits_0^s L^0(\sigma)\,d\sigma\Big)}.
$$
We thus arrive at the final formula
\begin{equation}
\Theta(s)=2\arctan\left[\sqrt{3}\,\,\frac{1-\exp\Big(\frac{1}{2}\int\limits_0^s L^0(\sigma)\,d\sigma\Big)}
{1+\exp\Big(\frac{1}{2}\int\limits_0^s L^0(\sigma)\,d\sigma\Big)}\right]
\qquad(s\in[-\pi/3,\pi/3]).
                                \label{7.38}
\end{equation}

In the same way we find the solutions to Cauchy problems \eqref{7.33} and \eqref{7.34}
\begin{equation}
\Theta(s)=2\arctan\left[\sqrt{3}\,\,\frac{1-\exp\Big(-\frac{1}{2}\int\limits_{2\pi/3}^s L^+(\sigma)\,d\sigma\Big)}
{1+\exp\Big(-\frac{1}{2}\int\limits_{2\pi/3}^s L^+(\sigma)\,d\sigma\Big)}\right]+2\pi/3
\qquad(s\in[\pi/3,\pi]),
                                \label{7.43}
\end{equation}
\begin{equation}
\Theta(s)=-2\arctan\left[\sqrt{3}\,\,\frac{1-\exp\Big(-\frac{1}{2}\int\limits_{-2\pi/3}^s L^-(\sigma)\,d\sigma\Big)}
{1+\exp\Big(-\frac{1}{2}\int\limits_{-2\pi/3}^s L^-(\sigma)\,d\sigma\Big)}\right]-2\pi/3
\qquad(s\in[-\pi,-\pi/3]).
                                \label{7.45}
\end{equation}
The integrals on r.h.s. of \eqref{7.38}--\eqref{7.45} can be also expressed by an explicit formula. Indeed, as follows from \eqref{7.31},
\begin{equation}
\int\limits_0^s L^0(\sigma)\,d\sigma=
\widehat{L^0_0}\,s-i\sum\limits_{n\in\Z\setminus\{0\}}\frac{1}{n}\widehat{L^0_n}(e^{ins}-1),
                                \label{7.39}
\end{equation}
\begin{equation}
\int\limits_{2\pi/3}^s L^+(\sigma)\,d\sigma=
\widehat{L^+_0}(s-2\pi/3)-i\sum\limits_{n\in\Z\setminus\{0\}}\frac{1}{n}\widehat{L^+_n}(e^{ins}-e^{2in\pi/3}),
                                \label{7.44}
\end{equation}
\begin{equation}
\int\limits_{-2\pi/3}^s L^-(\sigma)\,d\sigma=
\widehat{L^-_0}(s+2\pi/3)-i\sum\limits_{n\in\Z\setminus\{0\}}\frac{1}{n}\widehat{L^-_n}(e^{ins}-e^{-2in\pi/3}).
                                \label{7.46}
\end{equation}

\subsection{Recovering the domain $\Omega$}
After the function $\Theta(s)$ has been determined, the domain $\Omega$ can be easily recovered as follows.

Given the function $\theta=\Theta(s)$, the inverse function $s=S(\theta)$ is well defined since $\Theta'>0$. By \eqref{7.2}, the real
function $S\in C^\infty({\mathbb R})$ possesses similar properties
\begin{equation}
S(0)=0,\quad S'(\theta)>0,\quad S(\pm \frac{3\pi}{2})=\pm\frac{3\pi}{2},\quad S(\theta+2k\pi)=S(\theta)+2k\pi\quad(k\in\Z).
                                \label{7.47}
\end{equation}
Then we introduce the function $0<a\in C^\infty({\mathbb S})$ by
\begin{equation}
a(\theta)=\frac{1}{S'(\theta)}.
                                \label{7.48}
\end{equation}
It satisfies the normalization condition
\begin{equation}
\frac{1}{2\pi}\int\limits_{-\pi}^\pi\frac{d\theta}{a(\theta)}=1.
                                \label{7.48a}
\end{equation}
The condition plays an important role in \cite{JS0,JS2}, but we have not used it before.

Given the function $a$, the domain $\Omega$ is recovered by the following procedure taken from the proof of \cite[Lemma 1]{E}.

Given a function $0<a\in C^\infty({\mathbb S})$, we first set $h(\theta)=-\ln a(\theta)$. Let
\begin{equation}
h(\theta)=\sum\limits_{n=-\infty}^\infty{\widehat h}_n e^{in\theta}
                               \label{7.49}
\end{equation}
be the Fourier series of $h$. Define $k(\theta)$ by
\begin{equation}
k(\theta)={\widehat h}_0+2\sum\limits_{n=1}^\infty{\widehat h}_n e^{in\theta}.
                               \label{7.50}
\end{equation}
Then $e^k$ extends to a non-vanishing holomorphic function on the disc ${\mathbb D}$, whose absolute value at the boundary is
$e^{\RealPart(k)}=a^{-1}$.
Hence we can recover the biholomorphism $\Phi$ from \eqref{5.11} as
\begin{equation}
\Phi(z)=\int\limits_{z_0}^z e^{k(\zeta)}\,d\zeta\quad(z\in{\mathbb D}).
                               \label{7.51}
\end{equation}
The domain $\Omega=\Phi({\mathbb D})$ is determined by the function $a$ uniquely up to a shift and rotation.

\subsection{Computing the function $a(\theta)$}
We represent the function $a$ by its Fourier series
\begin{equation}
a(\theta)=\sum\limits_{n=-\infty}^\infty{\widehat a}_n e^{in\theta}.
                               \label{7.52}
\end{equation}
By \eqref{7.48},
$$
{\widehat a}_n=\frac{1}{2\pi}\int\limits_{-\pi}^\pi e^{-in\theta} a(\theta)\,d\theta
=\frac{1}{2\pi}\int\limits_{-\pi}^\pi e^{-in\theta} \,\frac{d\theta}{S'(\theta)}.
$$
Changing the integration variable by
$$
\theta=\Theta(s),\quad s=S(\theta),\quad d\theta=\Theta'(s)\,ds,\quad \Theta(s)S'(\theta)=\frac{1}{\Theta'(s)},
$$
we obtain
$$
{\widehat a}_n=\frac{1}{2\pi}\int\limits_{-\pi}^\pi e^{-in\Theta(s)} \big(\Theta'(s)\big)^2\,ds.
$$
Integration limits have not been changed since the integrand is a $2\pi$-periodic function.
We split the latter integral to three parts
\begin{equation}
{\widehat a}_n=\frac{1}{2\pi}\int\limits_{-\pi/3}^{\pi/3} e^{-in\Theta(s)} \big(\Theta'(s)\big)^2\,ds
+\frac{1}{2\pi}\int\limits_{\pi/3}^{\pi} e^{-in\Theta(s)} \big(\Theta'(s)\big)^2\,ds
+\frac{1}{2\pi}\int\limits_{-\pi}^{-\pi/3} e^{-in\Theta(s)} \big(\Theta'(s)\big)^2\,ds.
                               \label{7.53}
\end{equation}

Now, we compute the first integral on the right-hand side of \eqref{7.53}.
Introducing the function
\begin{equation}
E^0(s)=\exp\Big(\frac{1}{2}\int\limits_0^s L^0(\sigma)\,d\sigma\Big)\quad(s\in[-\pi/3,\pi/3]),
                               \label{7.54}
\end{equation}
we rewrite \eqref{7.38} in the form
$$
\Theta(s)=2\arctan\Big(\sqrt{3}\,\,\frac{1-E^0(s)}{1+E^0(s)}\Big)\quad(s\in[-\pi/3,\pi/3]).
$$
From this
$$
e^{-in\Theta(s)}=\exp\Big[-2in\arctan\Big(\sqrt{3}\,\,\frac{1-E^0(s)}{1+E^0(s)}\Big)\Big]
$$
and
$$
(\Theta'(s))^2=\frac{3}{4}\,\frac{(L^0(s))^2(E^0(s))^2}{\Big(1-E^0(s)+(E^0(s))^2\Big)^2}.
$$
Two last formulas imply
\begin{equation}
\begin{aligned}
\frac{1}{2\pi}&\int\limits_{-\pi/3}^{\pi/3} e^{-in\Theta(s)} \big(\Theta'(s)\big)^2\,ds=\\
&=\frac{3}{8\pi}\int\limits_{-\pi/3}^{\pi/3}
\exp\Big[-2in\arctan\Big(\sqrt{3}\,\,\frac{1-E^0(s)}{1+E^0(s)}\Big)\Big]
\Big(\frac{L^0(s)E^0(s)}{1-E^0(s)+(E^0(s))^2}\Big)^2\,ds.
\end{aligned}
                               \label{7.58}
\end{equation}

Two last integrals on the right-hand side of \eqref{7.53} are expressed in the same way:
\begin{equation}
E^+(s)=\exp\Big(-\frac{1}{2}\int\limits_{2\pi/3}^s L^+(\sigma)\,d\sigma\Big)\quad(s\in[\pi/3,\pi]),
                               \label{7.59}
\end{equation}
\begin{equation}
E^-(s)=\exp\Big(-\frac{1}{2}\int\limits_{-2\pi/3}^s L^-(\sigma)\,d\sigma\Big)\quad(s\in[-\pi,-\pi/3]);
                               \label{7.60}
\end{equation}
\begin{equation}
\begin{aligned}
\frac{1}{2\pi}&\int\limits_{\pi/3}^{\pi} e^{-in\Theta(s)} \big(\Theta'(s)\big)^2\,ds=\\
&=\frac{3}{8\pi}e^{-2in\pi/3}\int\limits_{\pi/3}^{\pi}
\exp\Big[-2in\arctan\Big(\sqrt{3}\,\,\frac{1-E^+(s)}{1+E^+(s)}\Big)\Big]
\Big(\frac{L^+(s)E^+(s)}{1-E^+(s)+(E^+(s))^2}\Big)^2\,ds,
\end{aligned}
                               \label{7.61}
\end{equation}
\begin{equation}
\begin{aligned}
\frac{1}{2\pi}&\int\limits_{-\pi}^{-\pi/3} e^{-in\Theta(s)} \big(\Theta'(s)\big)^2\,ds=\\
&=\frac{3}{8\pi}e^{2in\pi/3}\int\limits_{-\pi}^{-\pi/3}
\exp\Big[2in\arctan\Big(\sqrt{3}\,\,\frac{1-E^-(s)}{1+E^-(s)}\Big)\Big]
\Big(\frac{L^-(s)E^-(s)}{1-E^-(s)+(E^-(s))^2}\Big)^2\,ds.
\end{aligned}
                               \label{7.62}
\end{equation}
Substituting expressions \eqref{7.58}, \eqref{7.61}--\eqref{7.62} into \eqref{7.53}, we arrive at the final formula
\begin{equation}
\begin{aligned}
{\widehat a}_n&=
\frac{3}{8\pi}\int\limits_{-\pi/3}^{\pi/3}
\exp\Big[-2in\arctan\Big(\sqrt{3}\,\,\frac{1-E^0(s)}{1+E^0(s)}\Big)\Big]
\Big(\frac{L^0(s)E^0(s)}{1-E^0(s)+(E^0(s))^2}\Big)^2\,ds\\
&+\frac{3}{8\pi}e^{-2in\pi/3}\int\limits_{\pi/3}^{\pi}
\exp\Big[-2in\arctan\Big(\sqrt{3}\,\,\frac{1-E^+(s)}{1+E^+(s)}\Big)\Big]
\Big(\frac{L^+(s)E^+(s)}{1-E^+(s)+(E^+(s))^2}\Big)^2\,ds\\
&+\frac{3}{8\pi}e^{2in\pi/3}\int\limits_{-\pi}^{-\pi/3}
\exp\Big[2in\arctan\Big(\sqrt{3}\,\,\frac{1-E^-(s)}{1+E^-(s)}\Big)\Big]
\Big(\frac{L^-(s)E^-(s)}{1-E^-(s)+(E^-(s))^2}\Big)^2\,ds.
\end{aligned}
                               \label{7.63}
\end{equation}
The formula is pretty bulky, but it involves neither $\Theta(s)$ nor $S(\theta)$. And what is the most important advantage of the formula
\eqref{7.63}, it allows us to avoid computation of the derivative $S'(\theta)$ participating in \eqref{7.48}.

Thus, given the functions $L^0(s)$ and $L^\pm(s)$, we compute $E^0(s)$ and $E^\pm(s)$ by formulas \eqref{7.54} and \eqref{7.59}--\eqref{7.60},
and then compute the Fourier coefficients ${\widehat a}_n$ by \eqref{7.63}. Recall also that integrals
$\int_0^sL^0\,d\sigma,\int_{2\pi/3}^sL^+\,d\sigma$ and $\int_{-2\pi/3}^sL^-\,d\sigma$ participating in \eqref{7.54} and
\eqref{7.59}--\eqref{7.60} can be computed by explicit formulas \eqref{7.39}, \eqref{7.44} and \eqref{7.46}.

After the Fourier coefficients ${\widehat a}_n$ have been computed, we compute the function $a(\theta)$ itself by \eqref{7.52}.

\subsection{The algorithm}\label{Reconalgo}
Concisely, the numerical reconstruction of a simply connected, probably multi-sheeted, domain $\Omega$ from the Hilbert transform
${\mathcal H}_\gamma$ consists of the following steps.

{\bf Preliminary step.} We compute Fourier coefficients of the bump functions $\mu^0,\mu^\pm$ by formulas \eqref{7.26}, \eqref{7.28}. The data
${\mathcal H}_\gamma$ is not used in the preliminary step.

Given the truncated matrix $(h_{mn})_{m,n=-N}^N$ of the operator ${\mathcal H}_\gamma$ (see \eqref{7.18}), we do the following.

{\bf Step 1.} Compute the Fourier coefficients $\widehat{K^\pm_n}$ of the kernels $K^\pm$ by the formula \eqref{7.16}. Then compute the Fourier
coefficients $\widehat{K^0_n}$ of $K^0$ by \eqref{7.11a}. We  emphasize that the kernels $K^0(s),K^\pm(s)$ themselves remain unknown. We do not
compute them since the Fourier series on the right-hand side of \eqref{7.14} is a divergent series.

{\bf Step 2.} Compute the Fourier coefficients $\widehat{L^0_n},\widehat{L^\pm_n}$ of smooth kernels by \eqref{7.30}. Then compute the functions
$L^0(s),L^\pm(s)$ by \eqref{7.31}; Fourier series \eqref{7.31} converge uniformly and absolutely since the functions $L^0(s),L^\pm(s)$ are
$C^1$-smooth. Then compute the functions $E^0(s),E^\pm(s)$ by formulas \eqref{7.54} and \eqref{7.59}--\eqref{7.60}. We recall that
$\int_0^sL^0\,d\sigma,\int_{2\pi/3}^sL^+\,d\sigma$ and $\int_{-2\pi/3}^sL^-\,d\sigma$  in \eqref{7.54}, \eqref{7.59} and \eqref{7.60} can be
computed by explicit formulas \eqref{7.39}--\eqref{7.46}.

{\bf Step 3.} Compute the Fourier coefficients ${\widehat a}_n$ of the function $a$ by \eqref{7.63}. Then compute the function $a(\theta)$
itself by \eqref{7.52}. Plot the graph of $a(\theta)$. Normalize  the function $a(\theta)$ by multiplying it by a positive constant chosen
so that the condition \eqref{7.48a} holds exactly.

{\bf Step 4.} Given the function $a(\theta)$, recover the holomorphic function $\Phi(z)$ on the unit disk $\D$ by formulas
\eqref{7.47}--\eqref{7.51}.
Then $\Omega=\Phi({\mathbb D})$. Finally, the required curve is $\Gamma=\{\Phi(e^{i\theta})\mid0\le\theta\le2\pi\}$.

\bigskip

\section{Numerical reconstructions} \label{numerical reconstructios}

\subsection{The forward problem}
In practical tomography, the truncated matrix $(\lambda_{mn})_{m,n=-N}^N$ of the DN operator is obtained from results of voltage -- current measurements.
In our numerical experiments, we compute the matrix for a given compact domain $\Omega\subset{\R}^2$ bounded by a smooth curve $\Gamma$. This first part
of the experiment is referred to as {\it the forward problem}. We first shortly discuss the numerical solution of  the forward problem.

Our algorithm for solving the inverse problem uses the matrix $(h_{mn})_{m,n=-N}^N$ of the Hilbert transform ${\mathcal H}_\gamma$.
As is seen from \eqref{5.0}--\eqref{5.1} and \eqref{7.17}
$$
h_{mn}=\frac{1}{m}\lambda_{mn}\ (m\neq0),\quad h_{0,n}=0.
$$

By the definition of the DN map,
\begin{equation}
\Lambda_\Gamma f=\frac{\partial u}{\partial\nu}\Big|_\Gamma,
                               \label{8.1}
\end{equation}
where $u$ is the solution to the Dirichlet problem
\begin{equation}
\Delta u = 0 \  \text{in } \Omega, \quad
u = f \ \text{on } \partial \Omega.
                               \label{8.2}
\end{equation}
As is well known, the Diriclet problem \eqref{8.2} is equivalent to a second kind integral equation on the curve $\Gamma$.
We follow the procedure in \cite{Nas} and \cite{Naq} of solving the integral equation.

Numerical implementation of the differentiation in \eqref{8.1} leads to some instability. To avoid the numerical differentiation, we use the following
well known trick. Let $v$ be the harmonic conjugate of the solution $u$ to the Diriclet problem \eqref{8.2}. At least for a simply connected $\Omega$,
the function $v$ is obtained from $u$ by integration that does not lead to instability. Then
$$
\frac{\partial u}{\partial\nu}(\gamma(s))=\frac{d(v(\gamma(s))}{ds},
$$
where $\gamma(s)$ is the arc-length parametrization of $\Gamma$, see
\cite[Theorem 3.1]{Naq}. Unlike $\frac{\partial u}{\partial\nu}$, the derivative $\frac{d(v(\gamma(s))}{ds}$ is expressed analytically and
does not need numerical differentiation if the Fourier series of $v(\gamma(s))$ is known.

For solving the forward problem for a simply connected
multi-sheeted domain $\Omega$, we use the following fact. If $\Phi : {\mathbb D}\rightarrow \Omega$ is a biholomorphism (${\mathbb D}$ is the
unit disk) and $\varphi=\Phi|_{\mathbb S} : {\mathbb S}\rightarrow \Gamma$, then
$\Lambda_{\mathbb D}=|\Phi'|_{\mathbb S}|\varphi^*\Lambda_\Omega\varphi^{*-1}$, see \cite{JS3}.

\begin{figure}[H]
    \centering

    \begin{subfigure}[t]{0.3\textwidth}
        \centering
        \includegraphics[width=\linewidth]{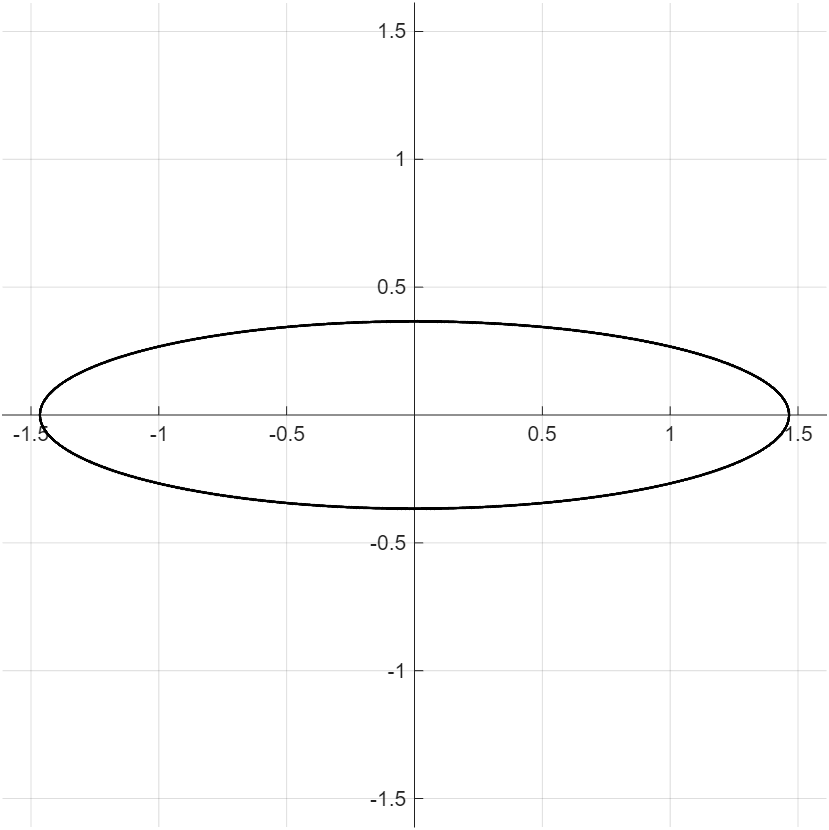}
        \caption{Original domain}
        \label{ellipse_recon_a}
    \end{subfigure}
    \hspace{0.1\textwidth}
    \begin{subfigure}[t]{0.3\textwidth}
        \centering
        \includegraphics[width=\linewidth]{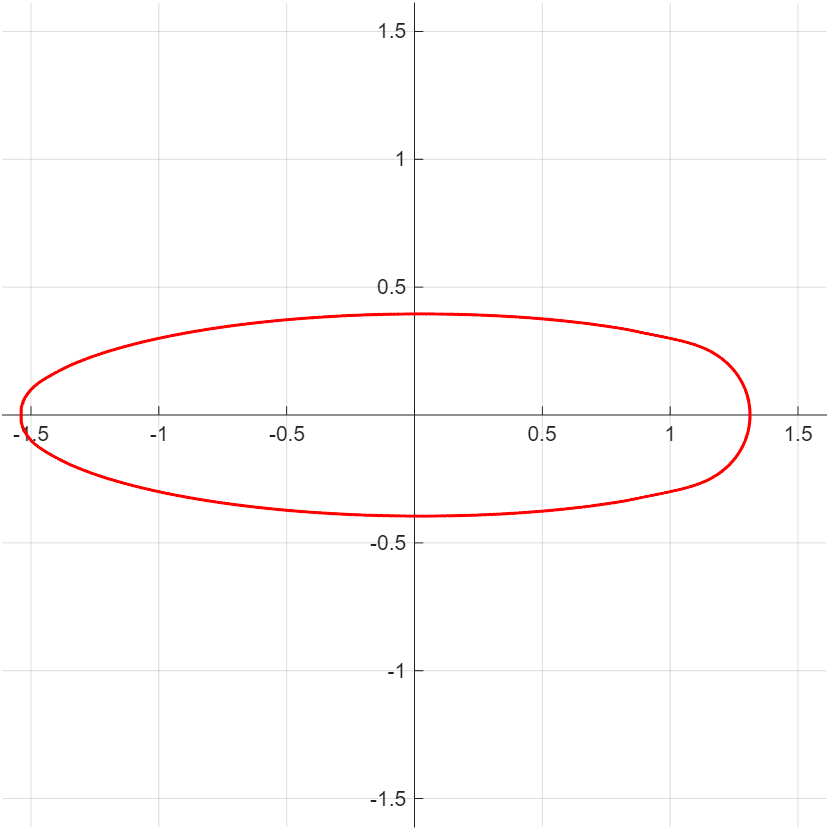}
        \caption{Recovered domain using $\pm 500$ modes of $a$ and $\pm 1000$ modes of $h$}
    \end{subfigure}
    \\[0.5cm]
    \begin{subfigure}[t]{0.3\textwidth}
        \centering
        \includegraphics[width=\linewidth]{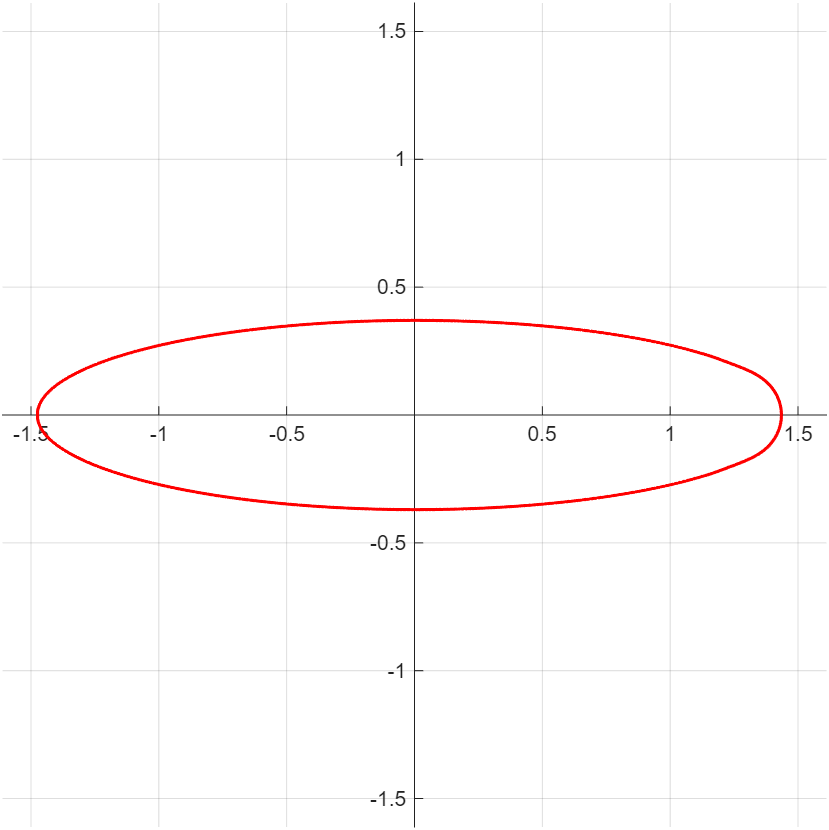}
        \caption{Recovered domain using $\pm 5000$ modes of $a$ and $\pm 10000$ modes of $h$}
    \end{subfigure}
    \hspace{0.1\textwidth}
    \begin{subfigure}[t]{0.3\textwidth}
        \centering
        \includegraphics[width=\linewidth]{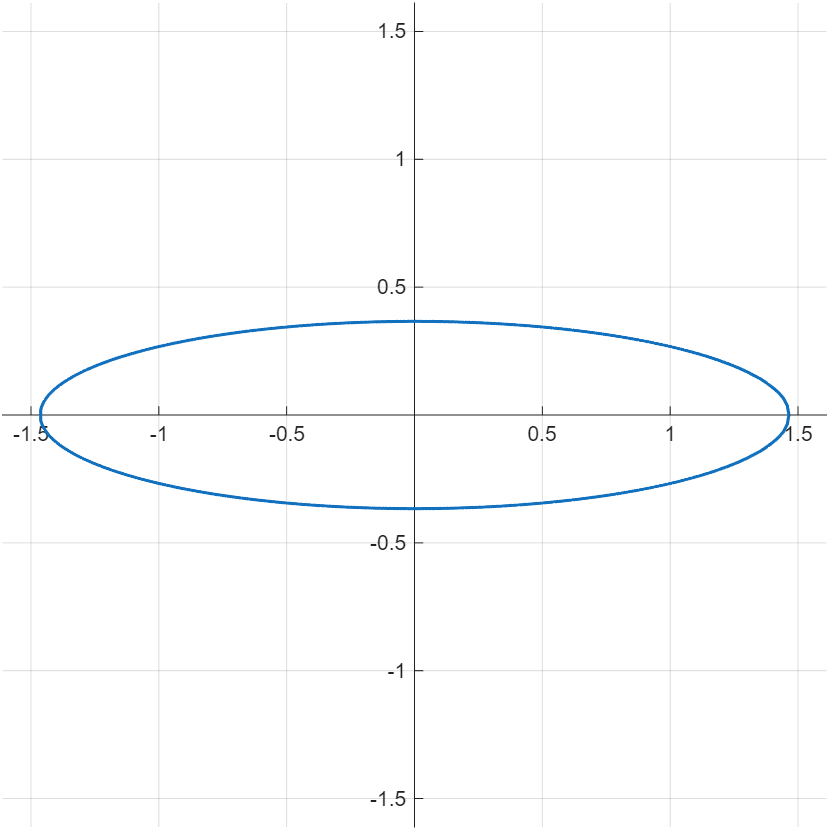}
        \caption{Recovered domain using $\pm 30000$ modes of $a$ and $\pm 30000$ modes of $h$}
    \end{subfigure}

    \caption{Numerical reconstruction of the region bounded by an ellipse. A large number of modes are required to obtain a good reconstruction of the original domain.}
    \label{ellipse_recon}
\end{figure}

We now present several numerical results of the reconstruction algorithm outline in  Section \ref{Reconalgo}. The reconstruction algorithm was
run using MATLAB R2025b on a Legion 5 (2021) with a 3.20 GHz 8 core AMD Ryzen 7 5800H processor, 16 GB 3200 MHz DDR4 RAM, and NVIDIA GeForce RTX
3060 6 GB GDDR6. Use of FINUFFT for MATLAB \cite{AHB,AHBet} greatly reduced the computation time of calculating the Fourier modes of $a$ and $h$. The code for the reconstruction algorithm can be found in \cite{code}.

\subsection{Reconstruction of an ellipse}
We reconstruct the domain $\Omega$ enclosed by an ellipse of perimeter $2\pi$  given by
$\Omega = \{ x+iy \, | \, \frac{x^{2}}{a^{2}} + \frac{y^{2}}{b^{2}} \leq 1, \frac{a}{b} = 4 \}$. We take its $201 \times 201$ truncated Hilbert
transform matrix, and feed it into the reconstruction algorithm. Figure \ref{ellipse_recon} below shows the original domain and its reconstructions.
As it is evident from the figure, a large number of modes are required to obtain a good reconstruction of the original ellipse. To obtain this level
of accuracy, $\pm 30000$ modes of the function $a$, and $\pm 30000$ modes for the function $h$ were calculated.

\subsection{Reconstruction of a domain symmetric about the $x$-axis and $y$-axis }
We consider the domain $\Omega = c\{ re^{i\theta} \, | \, 0 \leq r \leq  7 + \cos(4\theta) \, , \theta \in [0,2\pi]   \}$, where
$c \in \mathbb{R}_{> 0}$ is such that the perimeter of this domain is $2\pi$. For this reconstruction, $81 \times 81$ truncated Hilbert
transform matrix was used, $\pm 20$ modes of $a$ and $\pm 20$ modes of $h$ were used.
\begin{figure}[H]
    \centering
    \begin{subfigure}[t]{0.45\textwidth}
        \centering
        \includegraphics[width=\linewidth]{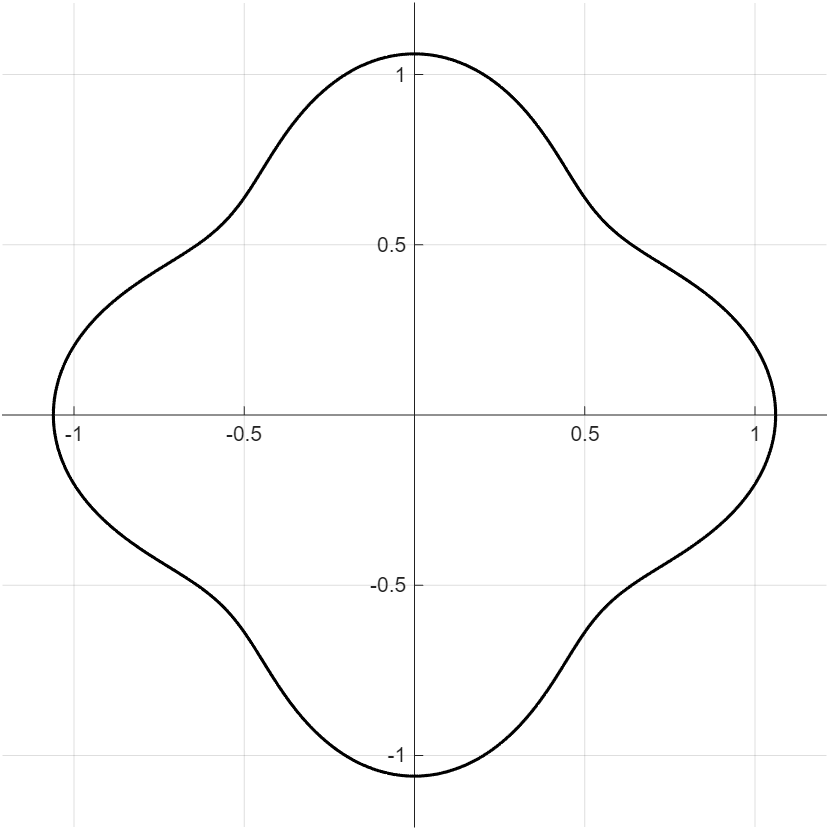}
        \caption{Original domain}
    \end{subfigure}
    \hfill
    \begin{subfigure}[t]{0.45\textwidth}
        \centering
        \includegraphics[width=\linewidth]{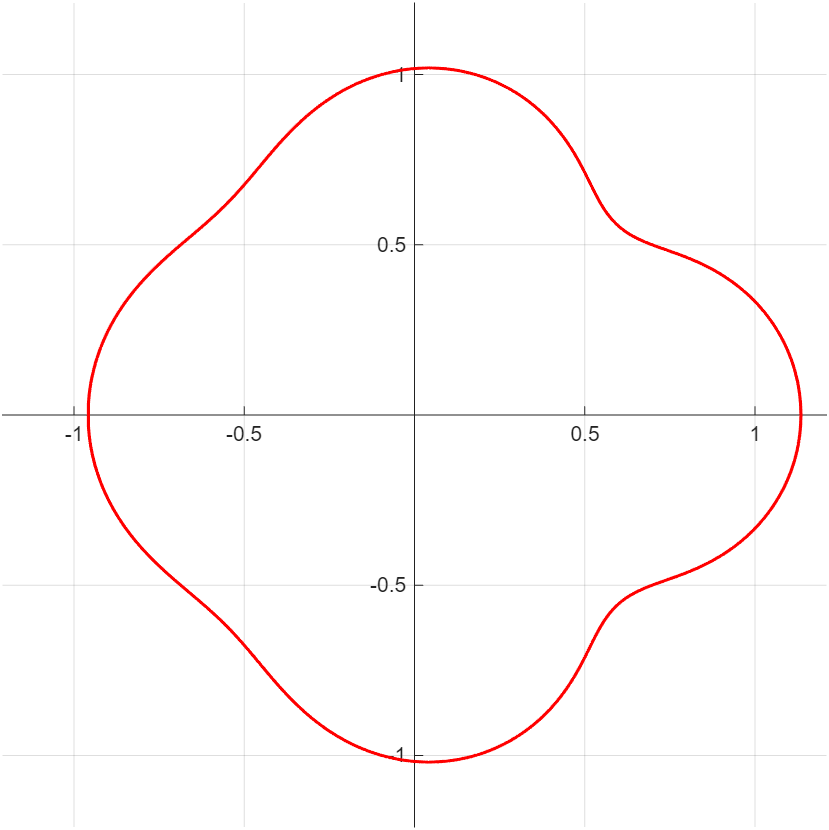}
        \caption{Recovered domain using $\pm 4$ modes of $a$ and $\pm 4$ modes of $h$}
    \end{subfigure}
    \hfill
    \begin{subfigure}[t]{0.45\textwidth}
        \centering
        \includegraphics[width=\linewidth]{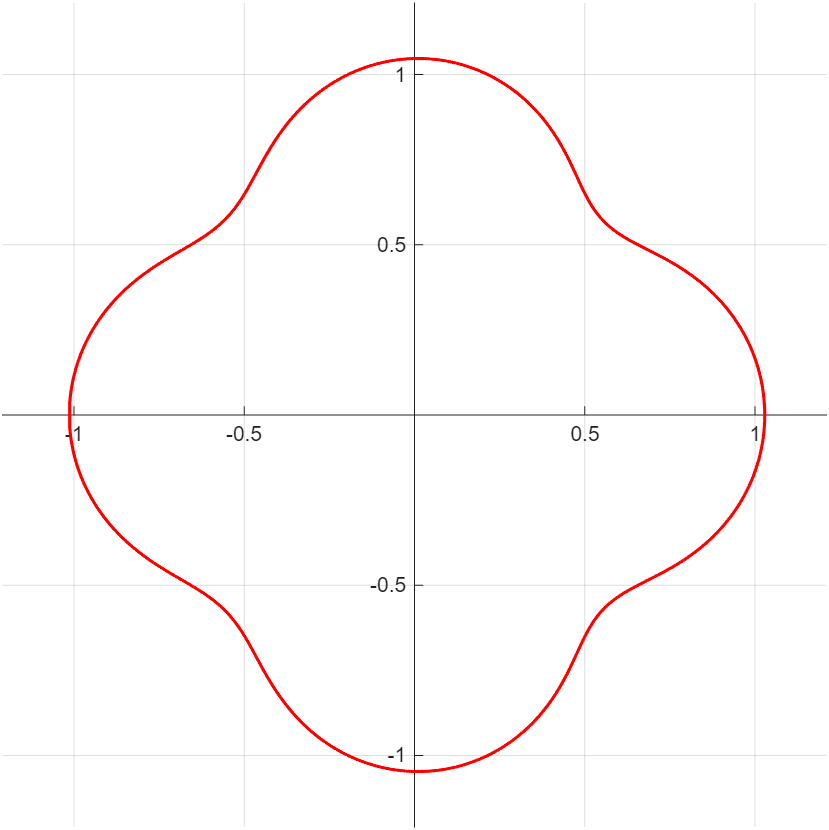}
        \caption{Recovered domain using $\pm 5$ modes of $a$ and $\pm 5$ modes of $h$}
    \end{subfigure}
    \hfill
    \begin{subfigure}[t]{0.45\textwidth}
        \centering
        \includegraphics[width=\linewidth]{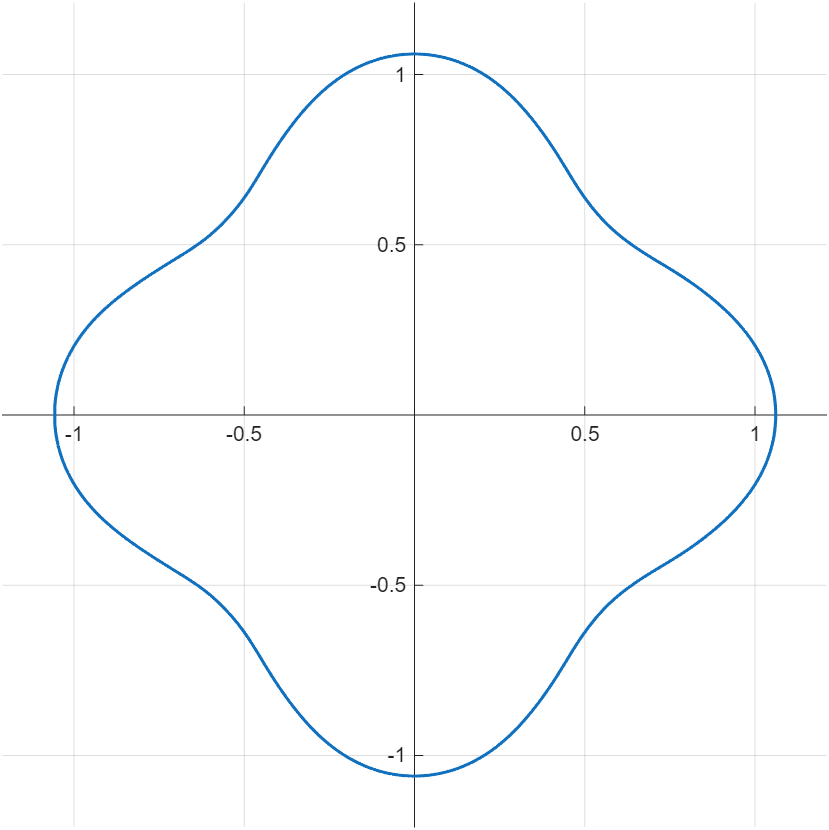}
        \caption{Recovered domain using $\pm 10$ modes of $a$ and $\pm 10$ modes of $h$}
    \end{subfigure}
    \caption{Numerical reconstruction of a domain symmetric about both the axes. Unlike the case of the elliptical domain, using far fewer
    modes achieves very good reconstruction of the original domain. }
    \label{symm_recon}
\end{figure}

\subsection{Reconstruction of a domain not symmetric about both the $x$- and $y$-axes}
We consider the domain $\Omega = c\{re^{i\theta} \in \mathbb{C}\, | \, 0 \leq r \leq  7 + \cos(4t) +1.5\sin(3t)+\sin(4t)\, , t \in \mathbb{R}\}$,
where $c \in \mathbb{R}_{> 0}$ is such that the perimeter of this domain is $2\pi$. Figure \ref{non_symm_recon} shows the reconstructions.
For this reconstruction, $201\times201$ truncated Hilbert transform matrix was used, $\pm 100$ modes of $a$ and $\pm 100$ modes of $h$ were
calculated.
\begin{figure}[H]
    \centering
    \begin{subfigure}[t]{0.45\textwidth}
        \centering
        \includegraphics[width=\linewidth]{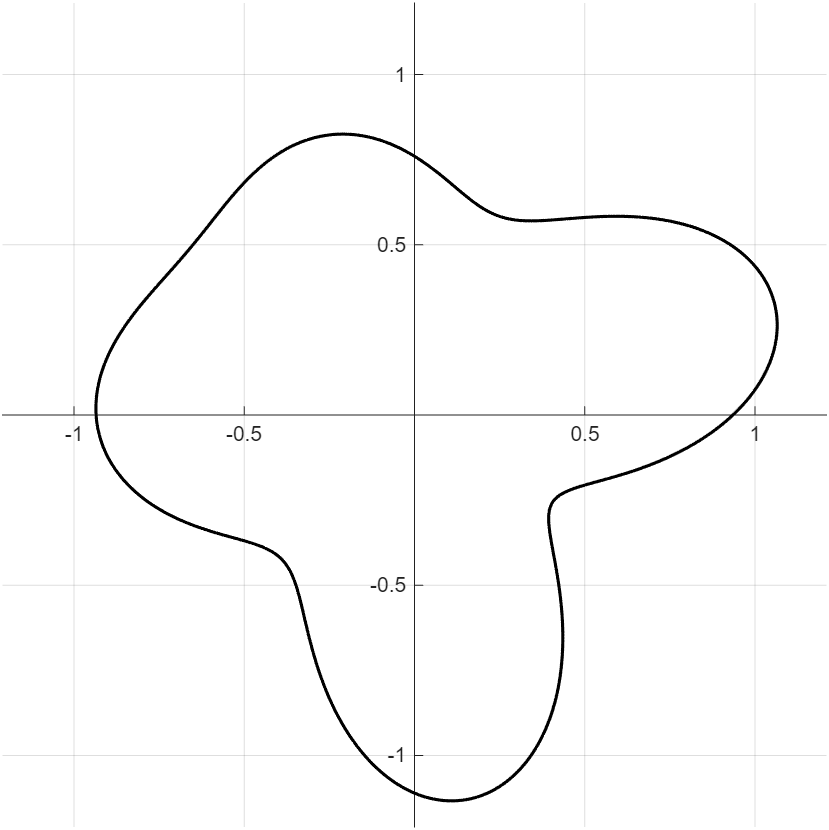}
        \caption{Original domain}
    \end{subfigure}
    \hfill
    \begin{subfigure}[t]{0.45\textwidth}
        \centering
        \includegraphics[width=\linewidth]{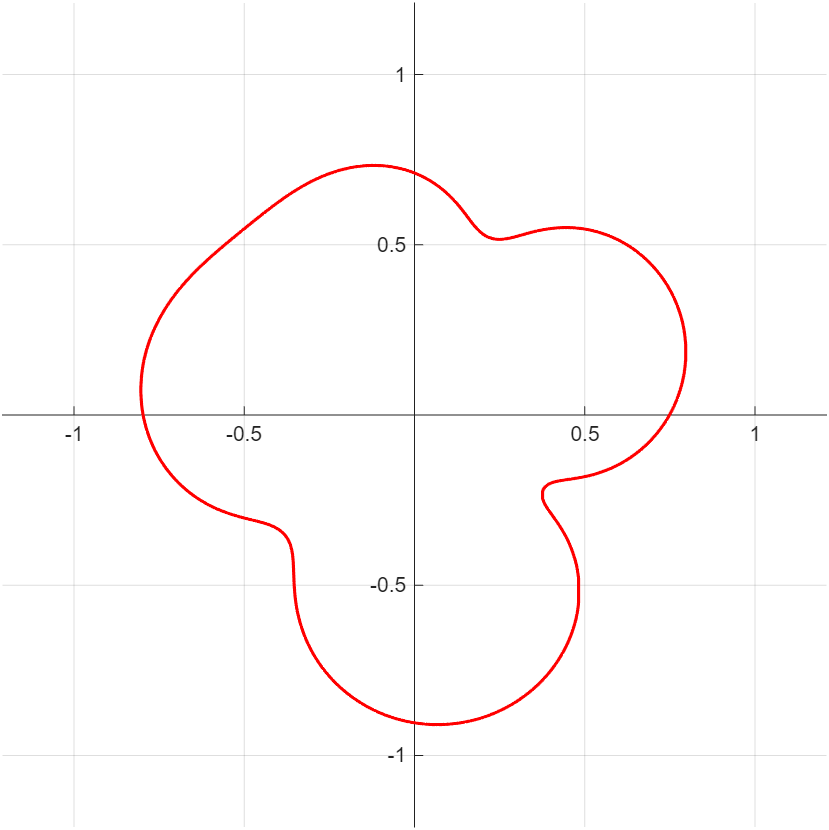}
        \caption{Recovered domain using $\pm 5$ modes of $a$ and $\pm 5$ modes of $h$}
    \end{subfigure}
    \hfill
    \begin{subfigure}[t]{0.45\textwidth}
        \centering
        \includegraphics[width=\linewidth]{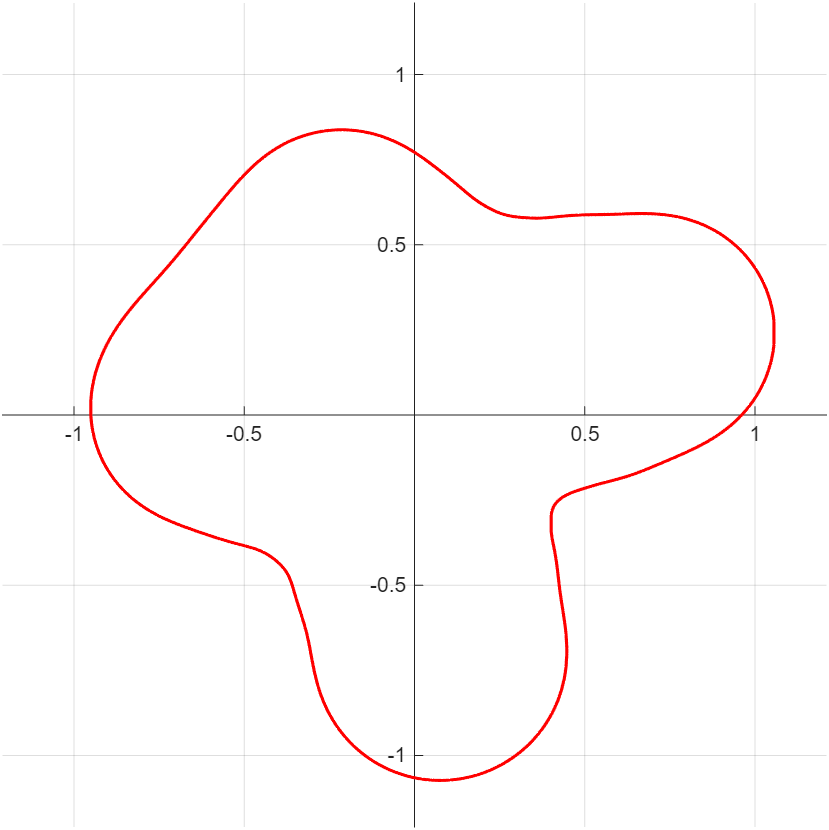}
        \caption{Recovered domain using $\pm 20$ modes of $a$ and $\pm 20$ modes of $h$}
    \end{subfigure}
    \hfill
    \begin{subfigure}[t]{0.45\textwidth}
        \centering
        \includegraphics[width=\linewidth]{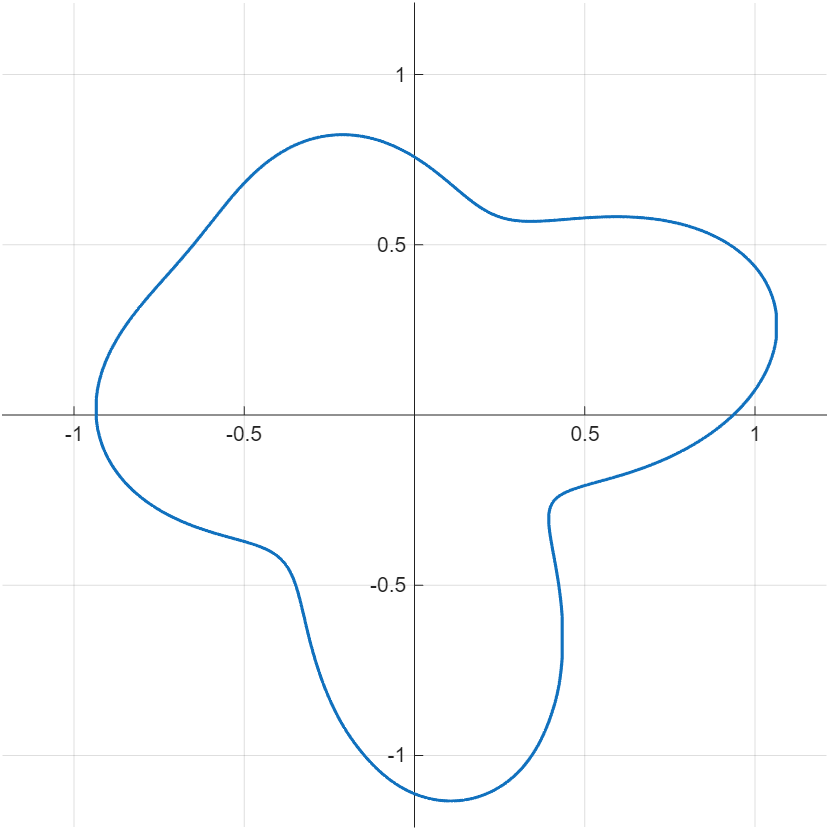}
        \caption{Recovered domain using $\pm 100$ modes of $a$ and $\pm 100$ modes of $h$}
    \end{subfigure}
    \caption{Reconstruction of a domain which isn neither symmetric about $x$ nor about $y$-axes. As with the previous case, using fewer
    modes achieves good reconstruction of the original domain. }
    \label{non_symm_recon}
\end{figure}

\begin{figure}[H]
    \centering
    \begin{subfigure}[t]{0.35\textwidth}
        \centering
        \includegraphics[width=\linewidth]{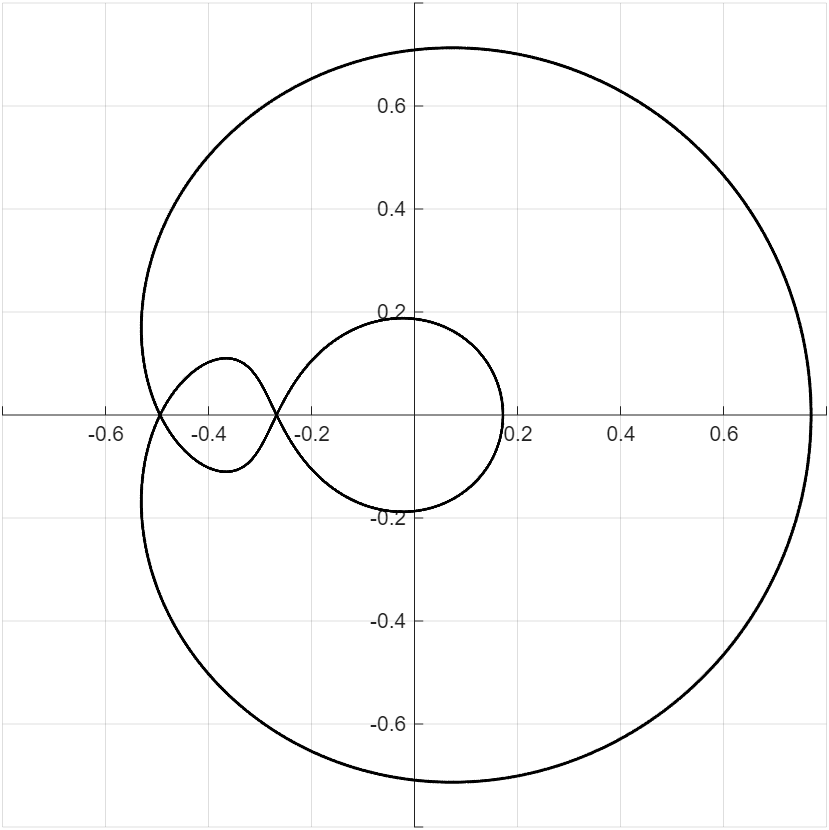}
        \caption{Original domain}
    \end{subfigure}
    \hfill
    \begin{subfigure}[t]{0.35\textwidth}
        \centering
        \includegraphics[width=\linewidth]{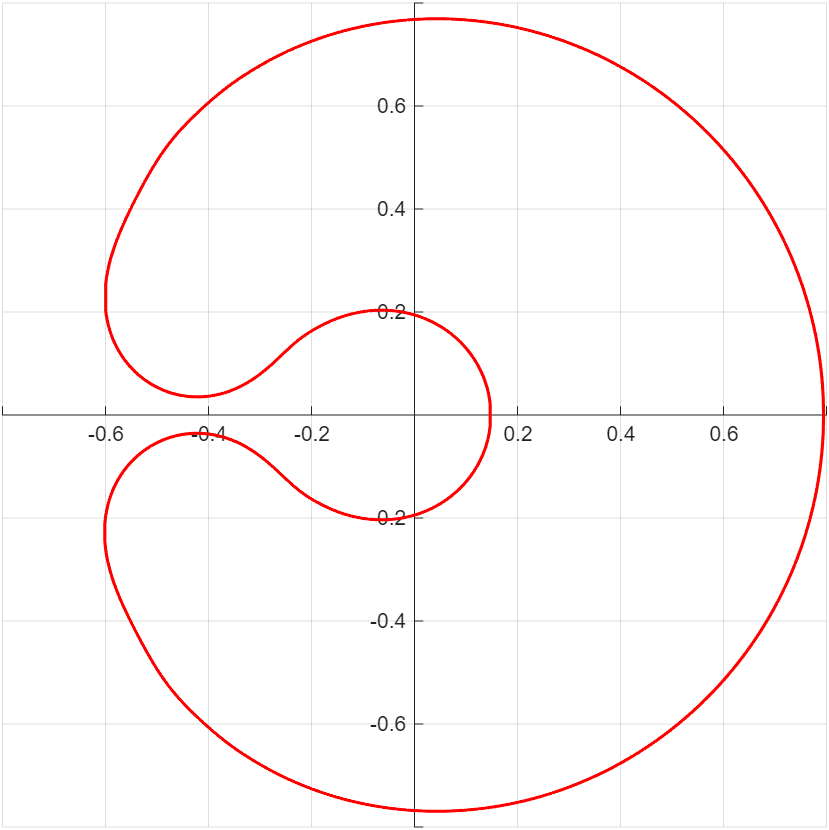}
        \caption{Recovered domain using $\pm 700$ modes of $a$ and $\pm 2500$ modes of $h$}
    \end{subfigure}
    \hfill
    \begin{subfigure}[t]{0.35\textwidth}
        \centering
        \includegraphics[width=\linewidth]{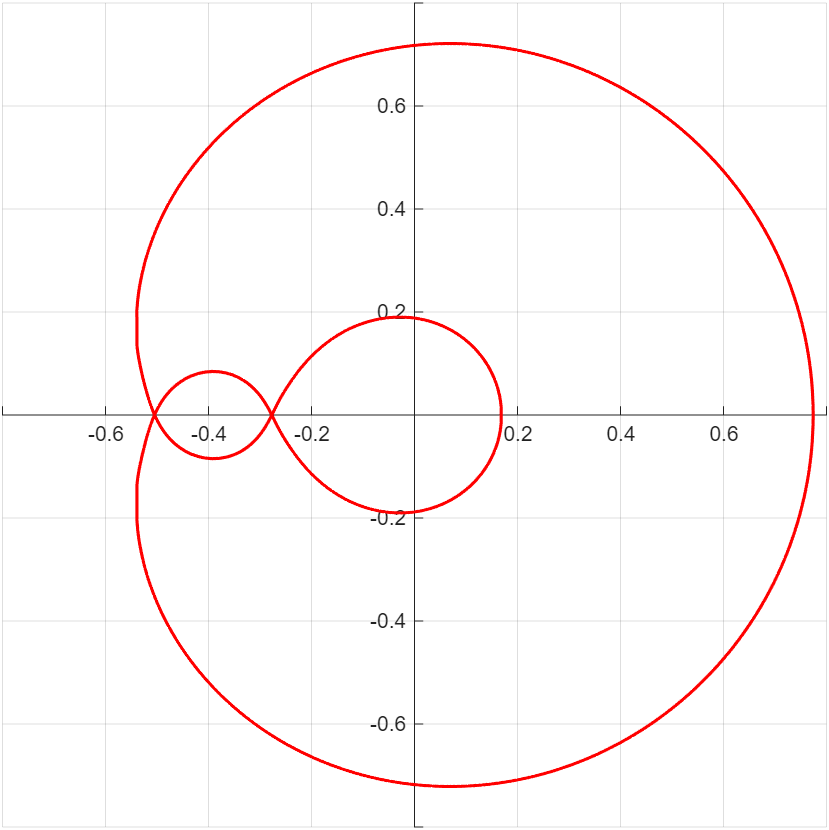}
        \caption{Recovered domain using $\pm 5000$ modes of $a$ and $\pm 15000$ modes of $h$}
    \end{subfigure}
    \hfill
    \begin{subfigure}[t]{0.35\textwidth}
        \centering
        \includegraphics[width=\linewidth]{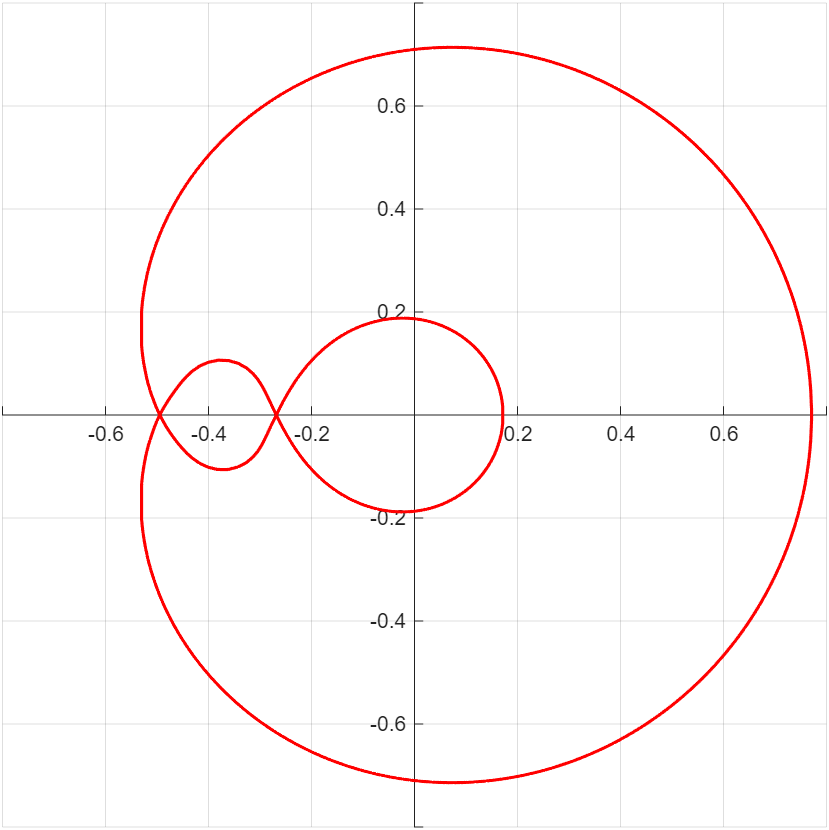}
        \caption{Recovered domain using $\pm 15000$ modes of $a$ and $\pm 25000$ modes of $h$}
    \end{subfigure}
    \hfill
    \begin{subfigure}[t]{0.4\textwidth}
        \centering
        \includegraphics[width=\linewidth]{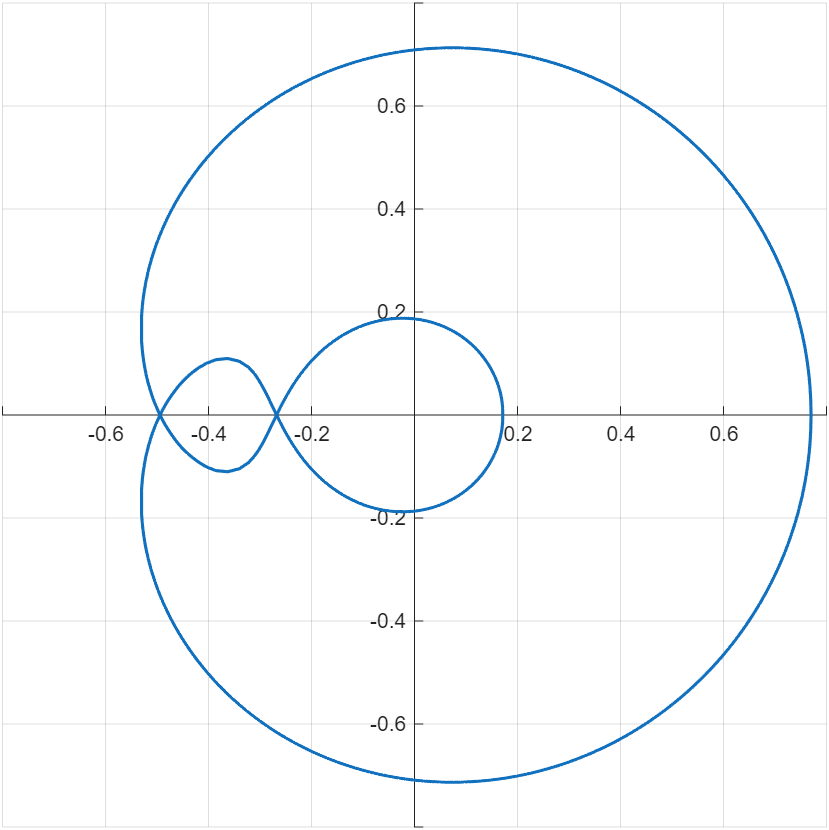}
        \caption{Recovered domain using $\pm 30000$ modes of $a$ and $\pm 50000$ modes of $h$}
    \end{subfigure}
    \caption{Reconstruction of a multi-sheet domain. A large number of modes are required to obtain a good accuracy in reconstruction.}
    \label{multisheet_recon}
\end{figure}

\subsection{Reconstruction of a multi-sheet domain}
The domain considered here is
$\Omega = c\{re^{i\theta} \, | \, e^{0.75\cos(\theta)+i(\pi+0.3) \sin(\theta)}\, , \theta \in [0,2\pi], 0\leq r \leq 1\}$, where c is chosen
such that the perimeter of the curve enclosing the domain is $2\pi$. Figure \ref{multisheet_recon} contains the results of the reconstruction.
A $301 \times 301$ truncated Hilbert transform matrix was used, $\pm 30000$ modes of $a$, and $\pm 50000$ modes of $h$ were used. It took $145$
seconds for the reconstruction algorithm to run.

{\bf Remark 1.} Recall that the function $a$ is defined by \eqref{7.48} and $h=-\ln a$. The algorithm uses truncated Fourier series \eqref{7.49} and \eqref{7.52} of these functions.
The reason it takes around $\pm 1000$ modes of $a$ and $h$ for some domains while it takes $\pm 30000$ modes of the same for
other domains (for eg. the ellipse with large $a/b$) is due to the fact that for domains like ellipse, $\Theta '$ changes slowly in some
intervals and rapidly in some other intervals, and hence $a = \frac{1}{S'}$ changes rapidly in some intervals. To resolve $a$, a large number
of modes are required.

{\bf Remark 2.} The value $\inf_{\theta \in \mathbb{S}} a(\theta)$ from \eqref{7.53} can get extremely close to zero for domains which
have parts where $\Theta '$ gets extremely close to $0$. Noise can make $\Theta '$ take negative values, which theoretically is not possible,
and hence is a potential source of unstable reconstruction.

\section{Acknowledgements}
The first author acknowledges the support of the Department of Atomic Energy, Government of India, for
PhD Fellowship.

The work of the second author was performed according to the Government research assignment
for IM SB RAS, project FWNF-2026-0026.

\end{document}